\documentclass[a4paper,10pt,oneside]{amsart}
\usepackage[DIV=14,BCOR=2mm,headinclude=false,footinclude=false]{typearea}
\usepackage{tikz}
\usepackage[latin1]{inputenc}
\usepackage{amsfonts}
\usepackage{amsmath}
\usepackage{amsthm}
\setcounter{MaxMatrixCols}{20}
\usepackage{amssymb}
\usepackage{mathtools}
\usepackage{amsbsy, hyperref}
\usepackage{mathrsfs}
\usepackage{dsfont}
\usepackage{tcolorbox}
\usepackage{comment}
\usepackage{bbm}
\usepackage{enumerate}
\usepackage[numbers]{natbib}
\usepackage{graphicx,color}
\usepackage{yfonts}
\usepackage[labelfont=rm,format=plain,indention=0cm,singlelinecheck=off,justification=raggedright,skip=2pt]{caption}
\usepackage[fit]{truncate}
\usepackage{placeins} 
\usepackage{flafter}  

\usetikzlibrary{calc,matrix,backgrounds}

\pgfdeclarelayer{overlay}
\pgfsetlayers{overlay,background,main}

\tikzset{circle/.style = {rounded corners,line width=1bp,color=#1}}%

\makeatletter
\setcounter{tocdepth}{3}

\renewcommand{\tocsection}[3]{%
  \indentlabel{\@ifnotempty{#2}{\bfseries\ignorespaces#1 #2\quad}}\bfseries#3}
\renewcommand{\tocsubsection}[3]{%
  \indentlabel{\@ifnotempty{#2}{\ignorespaces#1 #2\quad}}#3}

\newcommand\@dotsep{4.5}
\def\@tocline#1#2#3#4#5#6#7{\relax
  \ifnum #1>\c@tocdepth 
  \else
    \par \addpenalty\@secpenalty\addvspace{#2}%
    \begingroup \hyphenpenalty\@M
    \@ifempty{#4}{%
      \@tempdima\csname r@tocindent\number#1\endcsname\relax
    }{%
      \@tempdima#4\relax
    }%
    \parindent\z@ \leftskip#3\relax \advance\leftskip\@tempdima\relax
    \rightskip\@pnumwidth plus1em \parfillskip-\@pnumwidth
    #5\leavevmode\hskip-\@tempdima{#6}\nobreak
    \leaders\hbox{$\m@th\mkern \@dotsep mu\hbox{.}\mkern \@dotsep mu$}\hfill
    \nobreak
    \hbox to\@pnumwidth{\@tocpagenum{\ifnum#1=1\bfseries\fi#7}}\par
    \nobreak
    \endgroup
  \fi}
\AtBeginDocument{%
\expandafter\renewcommand\csname r@tocindent0\endcsname{0pt}
}
\def\l@subsection{\@tocline{2}{0pt}{2.5pc}{5pc}{}}
\makeatother




\newcommand{\Addresses}{{
  \bigskip
  \footnotesize
  Camil Muscalu, \textsc{Department of Mathematics, Cornell University,
    Ithaca, New York 14853}\par\nopagebreak
 \textit{E-mail address}: \texttt{camil@math.cornell.edu}\\
 
  Itamar Oliveira, \textsc{CNRS - Universit\'e de Nantes, Laboratoire de Math\'ematiques Jean Leray, Nantes 44322, France}\par\nopagebreak
 \textit{E-mail address}: \texttt{oliveira.itamar.w@gmail.com}

}}


\theoremstyle{plain}
\newtheorem{theorem}{Theorem}[section]

\newtheorem{lemma}[theorem]{Lemma}

\newtheorem{proposition}[theorem]{Proposition}

\theoremstyle{definition}

\newtheorem{remark}[theorem]{Remark}

\newtheorem{thmx}{Problem}


\def\eqalign#1{\null\,\vcenter{\openup\jot\mathsurround\dimen12
  \ialign{\strut\hfil$\textstyle{##}$&$\textstyle{{}##}$\hfil
      \crcr#1\crcr}}\,}


\begin{document}

\begin{abstract} This note presents a new proof of the well-known Strichartz estimates for the Schr\"odinger equation in $2+1$ dimensions, building on ideas from our recent work \cite{MO}.
\end{abstract}

\title[A new proof of Strichartz estimates for the Schr\"odinger equation in $2+1$ dimensions]{A new proof of Strichartz estimates for the Schr\"odinger equation in $2+1$ dimensions}
\author{Camil Muscalu and Itamar Oliveira}
\thanks{C.M. was partially supported by a grant from the Simons Foundation. I.O. is supported by ERC project FAnFArE no. 637510.}
\date{}
\maketitle

\tableofcontents

\section{Introduction}

Let $S$ be a subset of $\mathbb{R}^{n}$ and $\mathrm{d}\mu$ a positive measure supported on $S$ and of temperate growth at infinity. In \cite{Str1}, Strichartz considered the following problems that we quote \textit{verbatim}:

\begin{thmx} For which values of $p$, $1\leq p<2$, is it true that $f\in L^{p}(\mathbb{R}^{n})$ implies $\widehat{f}$ has a well-defined restriction to $S$ in $L^{2}(\mathrm{d}\mu)$ with
$$\left(\int |\widehat{f}|^{2}\mathrm{d}\mu\right)^{\frac{1}{2}}\leq c_{p}\|f\|_{p}? $$

\end{thmx}

\begin{thmx} For which values of $q$, $2<q\leq\infty$, is it true that the tempered distribution $F\mathrm{d}\mu$ for each $F\in L^{2}(\mathrm{d}\mu)$ has Fourier transform in $L^{q}(\mathbb{R}^{n})$ with
$$\|\widehat{(F\mathrm{d}\mu)}\|_{q} \leq c_{q}\left(\int|F|^{2}\mathrm{d}\mu\right)^{\frac{1}{2}} ? $$
\end{thmx}

As it is pointed out in \cite{Str1}, a duality argument shows that these problems are equivalent if $\frac{1}{p}+\frac{1}{q}=1$. In the same paper, Strichartz gave a complete answer to the problems above when $S$ is a quadratic surface given by

$$S=\{x\in\mathbb{R}^{n}:R(x)=r\}, $$
where $R(x)$ is a polynomial of degree two with real coefficients, $r$ is a real constant and $\mathrm{d}\mu$ is a certain canonical measure on $S$ associated to $R$. 

In general, given a compact submanifold $S\subset\mathbb{R}^{n}$ and a function $f:\mathbb{R}^{d+1}\mapsto\mathbb{C}$, the \textit{Fourier restriction problem} asks for which pairs $(p,q)$ one has\footnote{We start with $f\in\mathcal{S}(\mathbb{R}^{d+1})$ so both $\widehat{f}$ and $\widehat{f}|_S$ are well-defined pointwise. If \eqref{restrictionstatement} holds for such $f$, we extend it to $L^{p}$ by density.}
\begin{equation}\label{restrictionstatement}
    \| \widehat{f}|_S\|_{L^{q}(S)}\lesssim \|f\|_{L^{p}(\mathbb{R}^{d+1})},
\end{equation}
where $\widehat{f}|_S$ is the restriction of the Fourier transform $\widehat{f}$ to $S$. This problem arises naturally in the study of certain Fourier summability methods and is known to be connected to questions in Geometric Measure Theory and in dispersive PDEs. We refer the reader to \cite{Tao-notes} for a detailed account of the problem. 

Here we specialize to the case $p=2$, $d=2$ and $S$ the compact piece of the paraboloid parametrized by $\Gamma(x)=(x,|x|^{2})\subset\mathbb{R}^{d+1}$, $x\in[0,1]^{d}$. As we briefly explain in the end of this introduction, this special case is closely related to the study of the Schr\"odinger equation. From \cite{Str1} and the necessary conditions for \eqref{restrictionstatement} to hold (see \cite{Tao-notes}), it follows that the restriction problem in this setting is equivalent up to the endpoint to showing the following:

\begin{theorem}[Tomas-Stein/Strichartz up to the endpoint]\label{mainthm} Let the Fourier extension operator $\mathcal{E}_{2}$ be defined on $C^{0}([0,1]^{2})$ by
$$\mathcal{E}_{2}(g)(x_{1},x_{2},t)=\int_{[0,1]^{2}}g(\xi_{1},\xi_{2})e^{-2\pi i(x_{1},x_{2})\cdot (\xi_{1},\xi_{2})}e^{-2\pi it(\xi_{1}^{2}+\xi_{2}^{2})}\mathrm{d}\xi_{1}\mathrm{d}\xi_{2}. $$
Then
\begin{equation}\label{bound1-jan1823}
\|\mathcal{E}_{2}g\|_{4+\varepsilon}\lesssim_{\varepsilon} \|g\|_{2}.
\end{equation}
\end{theorem}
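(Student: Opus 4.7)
The plan is to adapt the bilinear-to-linear strategy of Tao-Vargas-Vega to the $2{+}1$ paraboloid, informed by the tensor-type ideas from \cite{MO}. The starting point is the identity $\|\mathcal{E}_2 g\|_{4+\varepsilon}^{2} = \||\mathcal{E}_2 g|^{2}\|_{(4+\varepsilon)/2}$. I would insert a Whitney-type partition of the off-diagonal complement of $[0,1]^2\times[0,1]^2$, expressing $|\mathcal{E}_2 g|^{2}$ as a sum over dyadic scales $k\geq 0$ of contributions from pairs $(Q_1,Q_2)\in \mathcal{W}_k$ of sub-squares of common side $\sim 2^{-k}$ and mutual separation $\sim 2^{-k}$, plus a diagonal term handled by the trivial pointwise bound $\|\mathcal{E}_2 g_Q\|_\infty\lesssim\|g_Q\|_1 \lesssim 2^{-k}\|g_Q\|_2$ on a single small square. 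This geometry forces every Whitney pair to be transversal on the paraboloid.

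The first technical step is the bilinear $L^{2}$ estimate
\[
\|\mathcal{E}_2 g_{Q_1}\cdot\overline{\mathcal{E}_2 g_{Q_2}}\|_{L^{2}(\mathbb{R}^3)}\lesssim \|g_{Q_1}\|_{2}\|g_{Q_2}\|_{2},\qquad (Q_1,Q_2)\in\mathcal{W}_k,
\]
uniformly in $k$. By Plancherel, the left-hand side equals the $L^{2}$ norm of $g_{Q_1}\otimes \overline{g_{Q_2}}$ pushed forward by $(\xi_1,\xi_2)\mapsto (\xi_1-\xi_2,\,|\xi_1|^{2}-|\xi_2|^{2})$, and transversality on a Whitney pair makes this map a diffeomorphism onto its image with Jacobian bounded below by an absolute constant. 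Almost-orthogonality of the product-frequency supports across distinct pairs in $\mathcal{W}_k$ would then yield $\|\sum_{(Q_1,Q_2)\in\mathcal{W}_k}\mathcal{E}_2 g_{Q_1}\overline{\mathcal{E}_2 g_{Q_2}}\|_{2}\lesssim\|g\|_{2}^{2}$ at each fixed scale $k$.

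The second step is to convert these per-scale $L^{2}$ bounds into an $L^{(4+\varepsilon)/2}$ bound on the full sum. Interpolating between the scale-uniform $L^{2}$ estimate above and the trivial $L^{\infty}$ estimate $\|\mathcal{E}_2 g_{Q_1}\overline{\mathcal{E}_2 g_{Q_2}}\|_{\infty}\lesssim \|g_{Q_1}\|_1\|g_{Q_2}\|_1 \lesssim 2^{-2k}\|g_{Q_1}\|_{2}\|g_{Q_2}\|_{2}$ (by Cauchy-Schwarz on $Q_i$) produces geometric decay of order $2^{-c\varepsilon k}$ at each scale in $L^{(4+\varepsilon)/2}$. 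Summing this decay in $k$ collapses the whole bilinear expansion to $\|g\|_{2}^{2}$, yielding \eqref{bound1-jan1823}.

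The main obstacle I anticipate is the orthogonality bookkeeping: one must verify that the product Fourier supports across distinct Whitney pairs $(Q_1,Q_2)\in\mathcal{W}_k$ truly have bounded overlap, so that the per-scale sum behaves like a square function in $L^{(4+\varepsilon)/2}$ rather than an $L^1$-triangle sum, and that this orthogonality survives the interpolation with $L^\infty$. This is the point where the paraboloid geometry enters most heavily, and it is essentially the only place in the argument where the $\varepsilon$-room in the exponent is genuinely used; at the sharp endpoint $p=4$ the scale summation is exactly borderline, which is precisely what prevents this strategy from reaching the Tomas-Stein endpoint without further refinement.
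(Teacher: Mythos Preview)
Your sketch is essentially the Tao--Vargas--Vega bilinear-to-linear machine, and it can be made to work, but there is one genuine error in your key technical step and the route is quite different from what the paper actually does.

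The error: the map $(\xi,\eta)\mapsto(\xi-\eta,\,|\xi|^2-|\eta|^2)$ with $\xi,\eta\in\mathbb{R}^2$ goes from $\mathbb{R}^4$ to $\mathbb{R}^3$ and cannot be a diffeomorphism; its fibers are circles. You have imported the one-dimensional Jacobian argument (where the analogous map $\mathbb{R}^2\to\mathbb{R}^2$ \emph{is} a local diffeomorphism with Jacobian $|\xi-\eta|$) into a setting where it does not apply. The bilinear $L^2$ estimate for the $2$-paraboloid \emph{is} true---in fact it holds without any transversality hypothesis, which is equivalent to the endpoint $L^4$ Strichartz---but the correct argument is Plancherel followed by Cauchy--Schwarz along the circular fiber, not a Jacobian lower bound. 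Once that is repaired, your interpolation with $L^\infty$ and scale summation are fine, and the orthogonality bookkeeping you flag is handled as in \cite{TVV1}.

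The divergence from the paper: despite invoking \cite{MO}, your outline never uses the tensor structure. You propose a single Whitney decomposition of the four-dimensional set $[0,1]^2\times[0,1]^2\setminus\widetilde\Delta$, exactly as in \cite{TVV1}. The paper deliberately does something else (see Remark~\ref{rmk1-feb2123}): it performs \emph{two independent one-dimensional} Whitney decompositions, one per coordinate $\xi_j$, exploiting the factorization $e^{2\pi it|\xi|^2}=e^{2\pi it\xi_1^2}\cdot e^{2\pi it\xi_2^2}$. The resulting pieces are discretized by wave-packet expansions, and the only hard analytic input is the purely one-dimensional bilinear bound of Proposition~\ref{bilinear1d-6nov22}, threaded through a level-set argument on the dualized trilinear form. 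So your method treats the paraboloid as a genuinely two-dimensional surface via a three-variable change of coordinates, whereas the paper treats it as a product of two parabolas and uses only one-dimensional input. Your route, done correctly, is shorter and reaches the endpoint; the paper's route costs the endpoint but is meant to illustrate how higher-dimensional extension estimates might be assembled from lower-dimensional ones.
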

The $d$-dimensional analogue of the operator above is
$$\mathcal{E}_{d}(g)(x,t)=\int_{[0,1]^{d}}g(\xi)e^{-2\pi ix\cdot \xi}e^{-2\pi it|\xi|^{2}}\mathrm{d}\xi. $$

Strichartz proved in \cite{Str1} that $\mathcal{E}_{d}$ maps $L^{2}$ to $L^{\frac{2(d+2)}{d}}$, so the only novelty of this manuscript is the argument in the case $d=2$ without the endpoint\footnote{We are not able to reach the endpoint case $\varepsilon=0$ due to the way we interpolate between some key information later in the proof. Roughly speaking, the condition $\varepsilon>0$ is necessary to make certain series converge.}. We use the framework of our earlier paper \cite{MO}, in which we propose a new approach to the linear and multilinear Fourier extension problems for the paraboloid.

\begin{remark} Given that Theorem \ref{mainthm} is known in much greater generality (with the endpoint, in arbitrary dimensions, for other underlying submanifolds, etc.), it is opportune to make a comparison between the classical approaches to it and ours. As explained in Chapter 11 of \cite{MS}, an estimate such as
\begin{equation*}
\|\mathcal{E}_{d}g\|_{q}\lesssim \|g\|_{2}
\end{equation*}
is equivalent to showing that
\begin{equation}\label{ineq1-jan2523}
\|f\ast\widehat{\mu}\|_{L^{q}(\mathbb{R}^{d+1})}\lesssim \|f\|_{L^{q^{\prime}}(\mathbb{R}^{d+1})},
\end{equation}
where $\mu$ is the measure in $\mathbb{R}^{d+1}$ defined by the integral
$$\int_{\mathbb{R}^{d+1}}h(\xi,\tau)\mathrm{d}\mu(\xi,\tau) = \int_{\mathbb{R}^{d}}h(\xi,|\xi|^{2})\mathrm{d}\xi.$$

The curvature of the paraboloid makes $\widehat{\mu}$ decay, which implies good integrability properties for the convolution operator $f\mapsto f\ast\widehat{\mu}$. This observation plays an important role in the proof of \eqref{ineq1-jan2523}.

Our strategy is different from the previous one; roughly speaking, instead of relying on a three-dimensional observation such as $\widehat{\mu}(\xi)=O(\langle\xi\rangle^{-\alpha})$ for a certain $\alpha>0$, we exploit some sharp one-dimensional bilinear estimates to prove Theorem \ref{mainthm}. The argument that we will present implies \eqref{bound1-jan1823} directly, without reducing it to an inequality such as \eqref{ineq1-jan2523}, in which the $L^{p}$ norms on both sides are taken over the same Euclidean space. On the other hand, the proof relies strongly on the tensor structure embedded in the paraboloid. More explicitly, we take advantage of the simple identity
$$e^{2\pi it|\xi|^{2}}=e^{2\pi it\xi_{1}^{2}}\cdot e^{2\pi it\xi_{2}^{2}}.$$

In short, the scheme of the proof is as follows:
\begin{enumerate}[(a)]
\item Bilinearize $\mathcal{E}_{2}$. In general, multilinearizing an oscillatory integral operator brings into play underlying geometric features of the problem. This is a common step in the study of the extension operator that goes back to Zygmund in \cite{Zyg} and Fefferman's thesis \cite{Fef1}. It also plays a central role in the work \cite{TVV1} of Tao, Vargas and Vega, and more recently in Bourgain and Guth's work \cite{BG}.
\item Decompose the previously mentioned bilinearization of $\mathcal{E}_{2}$ into certain pieces to gain access to key geometric considerations. This will be achieved via a classical Whitney decomposition.
\item Discretize the pieces obtained in the previous step in order to conveniently place them in the framework of our earlier paper \cite{MO}.
\item Study the ``Whitney pieces". Their study will be reduced to the lower-dimensional bilinear analogue of \eqref{bound1-jan1823}.
\end{enumerate}

The proof was motivated by our paper \cite{MO}, in which we show that the linear and multilinear extension conjectures are true (up to the endpoint\footnote{We obtain the endpoint in certain cases.}) if one of the functions involved is a full tensor. One of the main ideas was to also take advantage of known bounds in the $d=1$ case. In the ongoing process of understanding the extent to which the method of \cite{MO} applies, we learned that it could be used to prove Theorem \ref{mainthm} without any tensor hypothesis.

We hope that this new argument in the simple $d=2$ case will foster further interest in trying to answer some very natural questions: \textit{can one prove $n$-dimensional extension estimates from (linear and multilinear) lower dimensional ones?} Or at least \textit{what does (linear and multilinear) lower dimensional extension theory reveal about its higher dimensional analogues?}

\end{remark}

\begin{remark} There are a number of different proofs of Strichartz estimates in dimensions $d=1,2$, and also of their analogues for other hypersurfaces. In particular, we highlight the papers \cite{BBCH, F, G, HZ, PV}, which covers a wide range of tools and techniques in Analysis.
\end{remark}

We close this short introduction with the connection between \eqref{bound1-jan1823} and dispersive PDEs, as suggested by the title of the paper. Consider the linear Schr\"odinger equation\footnote{The constant $-\frac{1}{2\pi}$ is chosen for cosmetic reasons related to the normalization of the Fourier transform.}

\begin{equation}\label{Schr}
    \begin{dcases}
        iu_{t} - \frac{1}{2\pi}\Delta u = 0, \\
        u(x,0)=u_{0}(x),  \\
    \end{dcases}
\end{equation}
where $\widehat{u_{0}}\in C^{\infty}(B(0,1))$. The solution can be directly computed by solving an ODE after taking the Fourier transform in $x$, which gives

$$ u(x,t)=\int_{\mathbb{R}^{d}}e^{2\pi i(x\cdot\xi +t|\xi|^{2})}\widehat{u_{0}}(\xi)\mathrm{d}\xi = \mathcal{E}_{d}(\widehat{u_{0}})(-x,-t),$$
hence estimates for $\mathcal{E}_{d}$ imply estimates for $u$. Fourier extension theory, a scaling argument (to drop the compact support hypothesis) and the fact that $C^{\infty}(\mathbb{R}^{d})$ is dense in $L^{2}(\mathbb{R}^{d})$ allows us to conclude that
\begin{equation}\label{Strestimate}
    \|u\|_{L^{\frac{2(d+2)}{d}}(\mathbb{R}^{d+1})}\lesssim \|u_{0}\|_{L^{2}(\mathbb{R}^{d})},
\end{equation}
which is the classical endpoint Strichartz estimate for the Schr\"odinger equation\footnote{We refer the reader to Chapter 11 of \cite{MS} for the details of the argument sketched above.}. Mixed-norm variants of \eqref{Strestimate} are extremely useful in the study of local well-posedness of certain nonlinear dispersive PDEs, but we do not discuss them here\footnote{See \cite{LP} for a more detailed account of these applications.}.

We thank Diogo Oliveira e Silva, Mateus Sousa and the two anonymous referees for bringing to our attention many of the cited references and for the feedback provided, which greatly improved the original version of this paper.

\section{Reduction to a localized bilinear estimate}\label{reductionmodel}

Showing that $\mathcal{E}_{2}$ maps $L^{2}$ to $L^{4+\varepsilon}$ for all $\varepsilon>0$ is equivalent to showing that

$$T(f,g)=\mathcal{E}_{2}(f)\cdot\mathcal{E}_{2}(g)$$
maps $L^{2}\times L^{2}$ to $L^{2+\varepsilon}$ for all $\varepsilon>0$, where now both $f$ and $g$ are supported on $[0,1]^{2}$. 

We start by decomposing $[0,1]^{2}\backslash\Delta$, where $\Delta=\{(x,x):x\in [0,1]\}$, into a union of non-overlapping dyadic cubes whose side-lengths are comparable to their distance to the diagonal $\Delta$. This is known as a \textit{Whitney decomposition} and there are many ways to achieve it. The exact way in which one does it is not crucial to our analysis, but the properties that we just described will play a central role in using lower dimensional phenomena to prove a higher dimensional result, which is our goal. For concreteness, we chose the classical construction from \cite{TVV1} represented in Figure \ref{A} as a reference, even though there are important differences between the roles played by the decomposition in \cite{TVV1} and ours (see Remark \ref{rmk1-feb2123}).

Let $\mathcal{D}^{k}_{[0,1]}$ denote the collection of closed dyadic intervals of the form $[l\cdot 2^{-k},(l+1)\cdot 2^{-k}]$, $k\geq 0$, in $[0,1]$. We say $I$ and $J$ in $\mathcal{D}^{k}_{[0,1]}$ are \textit{close} and write $I\sim J$ if they have the same length and are disjoint, but their parents\footnote{We say that $I^{\ast}$ is the \textit{parent} of $I$ if it is the only dyadic interval that contains $I$ and has twice its length.} $I^{\ast}$ and $J^{\ast}$ are not disjoint. We then have

\begin{equation}\label{whitney-jan2023}
[0,1]^{2}\backslash\Delta = \bigcup_{k}\bigcup_{\substack{I_{1}\sim I_{2} \\ I_{1},I_{2}\in\mathcal{D}^{k}_{[0,1]}}}I_{1}\times I_{2},
\end{equation}
with $|I_{1}|=|I_{2}|\approx d(I_{1},I_{2})$\footnote{$|I|$ denotes the length of the interval $I$, $d(I_{1},I_{2})$ is the distance between $I_{1}$ and $I_{2}$ and $\alpha\approx\beta$ means that there is a universal constant $C>0$ such that $$\frac{1}{C}\cdot|\alpha|\leq |\beta|\leq C\cdot|\alpha|.$$}.

\begin{figure}[h]
  \centering
\captionsetup{font=normalsize,skip=1pt,singlelinecheck=on}
  \includegraphics[scale=.5]{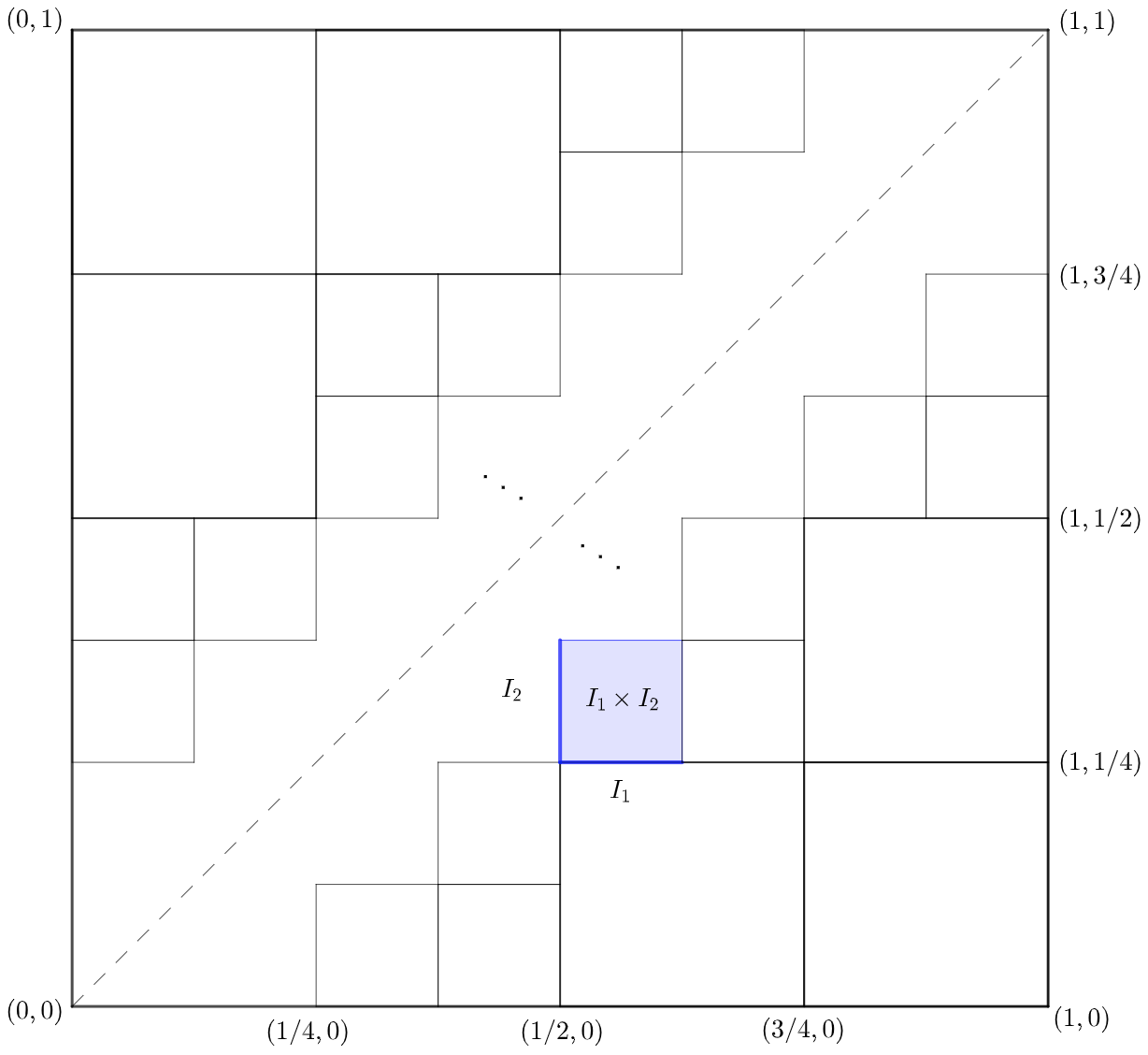}
 \caption{A classical Whitney decomposition.}
 \label{A}
\end{figure}

Decomposing the supports of $f$ and $g$ as above gives us

\begin{equation}\label{ineq1-5nov22}
\eqalign{
\displaystyle T&(f,g)(x,t) \cr
&=\displaystyle \sum_{k_{1},k_{2}\geq 0}\sum_{\substack{ I_{1}\sim I_{2}\in\mathcal{D}_{[0,1]}^{k_{1}} \\ J_{1}\sim J_{2}\in\mathcal{D}_{[0,1]}^{k_{2}}}}\left(\int_{J_{1}}\int_{I_{1}}f(\xi_{1},\xi_{2})e^{-2\pi ix\cdot\xi}e^{-2\pi it|\xi|^{2}}\mathrm{d}\xi_{1}\mathrm{d}\xi_{2}\right)\cdot \left(\int_{J_{2}}\int_{I_{2}}g(\eta_{1},\eta_{2})e^{-2\pi ix\cdot\eta}e^{-2\pi it|\eta|^{2}}\mathrm{d}\eta_{1}\mathrm{d}\eta_{2}\right). \cr
}
\end{equation}

One can further split the union in \eqref{whitney-jan2023} into $O(1)$ non-overlapping unions of cubes $Q\in\mathcal{K}_{\ell}$

\begin{equation*}
\bigcup_{k}\bigcup_{\substack{I_{1}\sim I_{2} \\ I_{1},I_{2}\in\mathcal{D}^{k}_{[0,1]}}}I_{1}\times I_{2} = \bigcup_{O(1)\textnormal{ indices }\ell}\left(\bigcup_{Q\in\mathcal{K}_{\ell}} Q\right)
\end{equation*}
such that the following holds: if $Q=I_{1}\times I_{2}\in\mathcal{K}_{\ell}$, then there is no other interval $I_{2}^{\prime}\neq I_{2}$ with $I_{1}\times I_{2}^{\prime}\in\mathcal{K}_{\ell}$ (equivalently, $I_{1}$ \textit{determines} $I_{2}$ in $\mathcal{K}_{\ell}$). In the picture above, one can see that four such sub-collections $\mathcal{K}_{\ell}$ are enough (we just need to ``separate the four diagonals" of cubes of a given scale). Because there are only $O(1)$ such $\mathcal{K}_{\ell}$, we will assume without loss of generality that the decompositions used in \eqref{ineq1-5nov22} also have this property. In other words, for each $I_{1}$ we will suppose that there is only one $I_{2}$ such that $I_{1}\sim I_{2}$.

\begin{remark}\label{rmk1-feb2123} It is common in the context of the Fourier extension problem to exploit the properties of certain Whitney decompositions, notably in Tao, Vargas and Vega's paper \cite{TVV1}. We highlight a fundamental difference between the employment of this tool in that work and in the present one: in \cite{TVV1}, to prove that certain bilinear extension estimates imply linear ones, one considers a Whitney decomposition for the set
$$U\times U\backslash\widetilde{\Delta}, $$
where $U\subset\mathbb{R}^{d}$ is a cube where both $f$ and $g$ are supported (similarly to the setting we had in the beginning of this section) and $\widetilde{\Delta}:=\{(v,w)\in U\times U:v=w\}$. In other words, assuming $d=2$ to place the discussion in the setting of this paper, \cite{TVV1} performs \textit{a single} decomposition in a \textit{four-dimensional} region. In contrast, we perform \textit{two independent decompositions} in distinct \textit{two-dimensional} regions.
\end{remark}

Fix $t$ and multiply $f(\xi)e^{-2\pi it|\xi|^{2}}$ by a positive bump $\widetilde{\varphi}_{I_{1}\times J_{1}}$ that is $\equiv 1$ on $I_{1}\times J_{1}$ and supported on a slight enlargement of that rectangle. By expanding $f(\xi)e^{-2\pi it|\xi|^{2}}\widetilde{\varphi}_{I_{1}\times J_{1}}(\xi)$ in Fourier series, we obtain:

\begin{equation*}
    f(\xi)e^{-2\pi it|\xi|^{2}}\widetilde{\varphi}_{I_{1}\times J_{1}}(\xi) = \sum_{\overrightarrow{\textit{\textbf{n}}}\in\mathbb{Z}^{2}}\langle f(\cdot)e^{-2\pi it|\cdot|^{2}},\varphi_{I_{1}\times J_{1}}^{\overrightarrow{\textit{\textbf{n}}}}\rangle\varphi_{I_{1}\times J_{1}}^{\overrightarrow{\textit{\textbf{n}}}},
\end{equation*}
where 

$$\varphi_{I_{1}\times J_{1}}^{\overrightarrow{\textit{\textbf{n}}}}(\xi) := \frac{1}{|I_{1}\times J_{1}|^{\frac{1}{2}}}\cdot \varphi_{I_{1}\times J_{1}}(\xi)\cdot e^{-2\pi i \overrightarrow{\textit{\textbf{n}}}\cdot (2^{k_{1}}\xi_{1},2^{k_{2}}\xi_{2})}$$
and $\varphi_{I_{1}\times J_{1}}$ is a bump adapted to $I_{1}\times J_{1}$\footnote{Notice that the bumps $\varphi_{I_{1}\times J_{1}}^{\overrightarrow{\textit{\textbf{n}}}}$ are \textit{$L^{2}$-normalized}. It is \textit{not} the case that for $\overrightarrow{\textit{\textbf{n}}}=0$ one has $\varphi_{I_{1}\times J_{1}}^{0}=\varphi_{I_{1}\times J_{1}}$. The latter is $L^{\infty}$-normalized.}. Analogously,
\begin{equation*}
    g(\eta)e^{-2\pi it|\eta|^{2}}\widetilde{\varphi}_{I_{2}\times J_{2}}(\xi) = \sum_{\overrightarrow{\textit{\textbf{n}}}\in\mathbb{Z}^{2}}\langle g(\cdot)e^{-2\pi it|\cdot|^{2}},\varphi_{I_{2}\times J_{2}}^{\overrightarrow{\textit{\textbf{n}}}}\rangle\varphi_{I_{2}\times J_{2}}^{\overrightarrow{\textit{\textbf{n}}}}.
\end{equation*}

Plugging this in \eqref{ineq1-5nov22},

\begin{equation}\label{ineq2-5nov22}
\eqalign{
\displaystyle T&\displaystyle (f,g)(x,t) \cr
&=\displaystyle \sum_{k_{1},k_{2}\geq 0}\sum_{\substack{ I_{1}\sim I_{2}\in\mathcal{D}_{[0,1]}^{k_{1}} \\ J_{1}\sim J_{2}\in\mathcal{D}_{[0,1]}^{k_{2}}}}\sum_{\overrightarrow{\textit{\textbf{n}}_{1}},\overrightarrow{\textit{\textbf{n}}_{2}}\in\mathbb{Z}^{2}} \langle f(\cdot)e^{-2\pi it|\cdot|^{2}},\varphi_{I_{1}\times J_{1}}^{\overrightarrow{\textit{\textbf{n}}_{1}}}\rangle\cdot \langle g(\cdot)e^{-2\pi it|\cdot|^{2}},\varphi_{I_{2}\times J_{2}}^{\overrightarrow{\textit{\textbf{n}}_{2}}}\rangle \varphi^{I_{1}\times J_{1}}_{\overrightarrow{\textit{\textbf{n}}_{1}}}(x)\cdot \varphi^{I_{2}\times J_{2}}_{\overrightarrow{\textit{\textbf{n}}_{2}}}(x) \cr
&\displaystyle=\sum_{\#=0}^{\infty}\sum_{k_{1},k_{2}\geq 0}\sum_{\substack{ I_{1}\sim I_{2}\in\mathcal{D}_{[0,1]}^{k_{1}} \\ J_{1}\sim J_{2}\in\mathcal{D}_{[0,1]}^{k_{2}}}}\sum_{\|\overrightarrow{\textit{\textbf{n}}_{1}}-\overrightarrow{\textit{\textbf{n}}_{2}}\|_{1}=\#} \langle f(\cdot)e^{-2\pi it|\cdot|^{2}},\varphi_{I_{1}\times J_{1}}^{\overrightarrow{\textit{\textbf{n}}_{1}}}\rangle\cdot \langle g(\cdot)e^{-2\pi it|\cdot|^{2}},\varphi_{I_{2}\times J_{2}}^{\overrightarrow{\textit{\textbf{n}}_{2}}}\rangle \varphi^{I_{1}\times J_{1}}_{\overrightarrow{\textit{\textbf{n}}_{1}}}(x)\cdot \varphi^{I_{2}\times J_{2}}_{\overrightarrow{\textit{\textbf{n}}_{2}}}(x) \cr
&\displaystyle=\sum_{\#=0}^{\infty}T_{\#}(f,g)(x,t), \cr
}
\end{equation}
where
\begin{equation*}
\eqalign{
    T_{\#}&\displaystyle (f,g)(x,t) \cr
    &\displaystyle = \sum_{k_{1},k_{2}\geq 0}\sum_{\substack{ I_{1}\sim I_{2}\in\mathcal{D}_{[0,1]}^{k_{1}} \\ J_{1}\sim J_{2}\in\mathcal{D}_{[0,1]}^{k_{2}}}}\sum_{\|\overrightarrow{\textit{\textbf{n}}_{1}}-\overrightarrow{\textit{\textbf{n}}_{2}}\|_{1}=\#} \langle f(\cdot)e^{-2\pi it|\cdot|^{2}},\varphi_{I_{1}\times J_{1}}^{\overrightarrow{\textit{\textbf{n}}_{1}}}\rangle\cdot \langle g(\cdot)e^{-2\pi it|\cdot|^{2}},\varphi_{I_{2}\times J_{2}}^{\overrightarrow{\textit{\textbf{n}}_{2}}}\rangle \varphi^{I_{1}\times J_{1}}_{\overrightarrow{\textit{\textbf{n}}_{1}}}(x)\cdot \varphi^{I_{2}\times J_{2}}_{\overrightarrow{\textit{\textbf{n}}_{2}}}(x) \cr
    }
\end{equation*}
and\footnote{We highlight the subtlety of the notation $\varphi^{Q}_{\overrightarrow{\textit{\textbf{n}}}}$; we swapped the indices $Q$ and $\overrightarrow{\textit{\textbf{n}}}$.}
$$\varphi^{Q}_{\overrightarrow{\textit{\textbf{n}}}}(x) := \widehat{\varphi_{Q}^{\overrightarrow{\textit{\textbf{n}}}}}(x). $$

It is important to remark that since $\varphi^{Q}_{\overrightarrow{\textit{\textbf{n}}}}$ is the Fourier transform of an $L^{2}$-normalized bump, it is also $L^{2}$-normalized. Observe that if $\overrightarrow{\textit{\textbf{n}}_{1}}\neq \overrightarrow{\textit{\textbf{n}}_{2}}$, the bumps $\varphi^{I_{1}\times J_{1}}_{\overrightarrow{\textit{\textbf{n}}_{1}}}$ and $\varphi^{I_{2}\times J_{2}}_{\overrightarrow{\textit{\textbf{n}}_{2}}}$ are essentially supported in different cubes, so the product
\begin{equation}\label{disjointsupp-6nov22}
    \varphi^{I_{1}\times J_{1}}_{\overrightarrow{\textit{\textbf{n}}_{1}}}(x)\cdot \varphi^{I_{2}\times J_{2}}_{\overrightarrow{\textit{\textbf{n}}_{2}}}(x) 
\end{equation}
decays very fast as $\#$ grows. We will prove bounds for the $\#=0$ term, henceforth denoted by $\widetilde{T}$. The argument is the same for any other $T_{\#}$ and it comes with an operatorial bound of $O(|\#|^{-100})$ due to the fast decay of the bumps in \eqref{disjointsupp-6nov22}, which is enough to sum the series in $\#$.

Finally, we discretize the $t$ variable. Multiply both sides of the expression of $\widetilde{T}$ by
$$1=\sum_{m\in\mathbb{Z}}\chi_{m}, $$
where $\chi_{m}(t)$ is the indicator function of the interval $[m,m+1)$. This gives

\begin{equation*}\label{disct-6nov22}
\eqalign{
\displaystyle \widetilde{T}&(f,g)(x,t) \cr
&=\displaystyle \sum_{k_{1},k_{2}\geq 0}\sum_{\substack{ I_{1}\sim I_{2}\in\mathcal{D}_{[0,1]}^{k_{1}} \\ J_{1}\sim J_{2}\in\mathcal{D}_{[0,1]}^{k_{2}}}}\sum_{(\overrightarrow{\textit{\textbf{n}}},m)\in\mathbb{Z}^{3}} \langle f(\cdot)e^{-2\pi it|\cdot|^{2}},\varphi_{I_{1}\times J_{1}}^{\overrightarrow{\textit{\textbf{n}}}}\rangle\cdot \langle g(\cdot)e^{-2\pi it|\cdot|^{2}},\varphi_{I_{2}\times J_{2}}^{\overrightarrow{\textit{\textbf{n}}}}\rangle \varphi^{I_{1}\times J_{1}}_{\overrightarrow{\textit{\textbf{n}}}}(x)\cdot \varphi^{I_{2}\times J_{2}}_{\overrightarrow{\textit{\textbf{n}}}}(x)\cdot \chi_{m}(t), \cr
}
\end{equation*}

For a fixed $t$, one can write
\begin{equation*}
    \eqalign{
    e^{-2\pi it|\xi|^{2}}\cdot\varphi_{[0,1]^{2}}(\xi)&\displaystyle = e^{-2\pi im|\xi|^{2}}\cdot e^{-2\pi i(t-m)|\xi|^{2}}\varphi_{[0,1]^{2}}(\xi) \cr
    &\displaystyle= e^{-2\pi im|\xi|^{2}}\cdot\sum_{\overrightarrow{\textit{\textbf{u}}}\in\mathbb{Z}^{2}}\langle e^{-2\pi i(t-m)|\cdot|^{2}},\varphi_{[0,1]^{2}}^{\overrightarrow{\textit{\textbf{u}}}}\rangle\varphi_{[0,1]^{2}}^{\overrightarrow{\textit{\textbf{u}}}} \cr
    }
\end{equation*}
by expanding $e^{-2\pi i(t-m)|\xi|^{2}}$ as a Fourier series\footnote{$\varphi_{[0,1]^{2}}$ denotes a bump that is $\equiv 1$ on $[0,1]^{2}$ and is supported in a slightly bigger box.} at scale $1$. Therefore we can write $\widetilde{T}(f,g)$ as

\begin{equation*}
\eqalign{
&\displaystyle \sum_{k_{1},k_{2}\geq 0}\sum_{\substack{ I_{1}\sim I_{2}\in\mathcal{D}_{[0,1]}^{k_{1}} \\ J_{1}\sim J_{2}\in\mathcal{D}_{[0,1]}^{k_{2}}}}\sum_{(\overrightarrow{\textit{\textbf{n}}},m)\in\mathbb{Z}^{3}}\sum_{\substack{\overrightarrow{\textit{\textbf{u}}}\in\mathbb{Z}^{2} \\ \overrightarrow{\textit{\textbf{v}}}\in\mathbb{Z}^{2}}} \cr
&\qquad\displaystyle \langle f(\cdot)e^{-2\pi im|\cdot|^{2}},\varphi_{I_{1}\times J_{1}}^{\overrightarrow{\textit{\textbf{n}}}}\cdot \varphi_{[0,1]^{2}}^{\overrightarrow{\textit{\textbf{u}}}}\rangle\cdot \langle g(\cdot)e^{-2\pi it|\cdot|^{2}},\varphi_{I_{2}\times J_{2}}^{\overrightarrow{\textit{\textbf{n}}}}\cdot \varphi_{[0,1]^{2}}^{\overrightarrow{\textit{\textbf{v}}}}\rangle \varphi^{I_{1}\times J_{1}}_{\overrightarrow{\textit{\textbf{n}}}}(x)\cdot \varphi^{I_{2}\times J_{2}}_{\overrightarrow{\textit{\textbf{n}}}}(x)\cdot C_{\overrightarrow{\textit{\textbf{u}}}}^{m,t}\cdot C_{\overrightarrow{\textit{\textbf{v}}}}^{m,t}\cdot \chi_{m}(t), \cr
}
\end{equation*}
where
$$C_{\overrightarrow{\textit{\textbf{u}}}}^{m,t}= \langle e^{-2\pi i(t-m)|\cdot|^{2}},\varphi_{[0,1]^{2}}^{\overrightarrow{\textit{\textbf{u}}}}\rangle,$$
$$C_{\overrightarrow{\textit{\textbf{v}}}}^{m,t}= \langle e^{-2\pi i(t-m)|\cdot|^{2}},\varphi_{[0,1]^{2}}^{\overrightarrow{\textit{\textbf{v}}}}\rangle.$$

For the expression defining $\widetilde{T}$ to be nonzero, $m$ must satisfy $|t-m|\leq 1$, hence the Fourier coefficients $C_{\overrightarrow{\textit{\textbf{u}}}}^{m,t}$ and $C_{\overrightarrow{\textit{\textbf{v}}}}^{m,t}$ decay like $O(|\overrightarrow{\textit{\textbf{u}}}|^{-100})$ and $O(|\overrightarrow{\textit{\textbf{v}}}|^{-100})$, respectively. In addition, the extra terms
$$\varphi_{[0,1]^{2}}^{\overrightarrow{\textit{\textbf{u}}}}\quad\textnormal{and}\quad\varphi_{[0,1]^{2}}^{\overrightarrow{\textit{\textbf{v}}}}$$
in the scalar products involving $f$ and $g$, respectively, simply shift the integrands in frequency, and this does not affect in any way the arguments that follow.

It is then enough to treat the $\overrightarrow{\textit{\textbf{u}}}=\overrightarrow{\textit{\textbf{v}}}=0$ case and the model operator (still detoned by $\widetilde{T}$ by a slight abuse of notation\footnote{There is another slight abuse of notation here: observe that $\widetilde{\chi}_{m}(t):=C_{\overrightarrow{\textit{\textbf{0}}}}^{m,t}\cdot\chi_{m}(t)$ is a smooth function supported in $[m,m+1)$, which is all that is needed in the proof. To simplify the notation, we will continue to call it $\chi_{m}(t)$ to lighten the notation.}):
\begin{equation*}\label{ineq3-5nov22}
\eqalign{
\displaystyle \widetilde{T}&(f,g)(x,t) \cr
&=\displaystyle \sum_{k_{1},k_{2}\geq 0}\sum_{\substack{ I_{1}\sim I_{2}\in\mathcal{D}_{[0,1]}^{k_{1}} \\ J_{1}\sim J_{2}\in\mathcal{D}_{[0,1]}^{k_{2}}}}\sum_{(\overrightarrow{\textit{\textbf{n}}},m)\in\mathbb{Z}^{3}} \langle f(\cdot)e^{-2\pi im|\cdot|^{2}},\varphi_{I_{1}\times J_{1}}^{\overrightarrow{\textit{\textbf{n}}}}\rangle\cdot \langle g(\cdot)e^{-2\pi im|\cdot|^{2}},\varphi_{I_{2}\times J_{2}}^{\overrightarrow{\textit{\textbf{n}}}}\rangle \varphi^{I_{1}\times J_{1}}_{\overrightarrow{\textit{\textbf{n}}}}(x)\cdot \varphi^{I_{2}\times J_{2}}_{\overrightarrow{\textit{\textbf{n}}}}(x)\cdot \chi_{m}(t). \cr
}
\end{equation*}

Fix $k_{1},k_{2}\geq 0$ and consider

\begin{equation*}
\eqalign{
    \widetilde{T}_{k_{1},k_{2}}&(f,g)(x,t) \cr
    &\displaystyle = \sum_{\substack{ I_{1}\sim I_{2}\in\mathcal{D}_{[0,1]}^{k_{1}} \\ J_{1}\sim J_{2}\in\mathcal{D}_{[0,1]}^{k_{2}}}}\sum_{(\overrightarrow{\textit{\textbf{n}}},m)\in\mathbb{Z}^{3}} \langle f(\cdot)e^{-2\pi im|\cdot|^{2}},\varphi_{I_{1}\times J_{1}}^{\overrightarrow{\textit{\textbf{n}}}}\rangle\cdot \langle g(\cdot)e^{-2\pi im|\cdot|^{2}},\varphi_{I_{2}\times J_{2}}^{\overrightarrow{\textit{\textbf{n}}}}\rangle \varphi^{I_{1}\times J_{1}}_{\overrightarrow{\textit{\textbf{n}}}}(x)\cdot \varphi^{I_{2}\times J_{2}}_{\overrightarrow{\textit{\textbf{n}}}}(x)\cdot \chi_{m}(t).
    }
\end{equation*}

To show that $\widetilde{T}$ maps $L^{p}\times L^{p}\mapsto L^{q}$ (and hence the same for $T$) it suffices to prove that each $\widetilde{T}_{k_{1},k_{2}}$ also maps $L^{p}\times L^{p}\mapsto L^{q}$ with operatorial norm $O(2^{-c_{p,q}(k_{1}+k_{2})})$, where $c_{p,q}$ is a positive constant.
This is is content of Theorem \ref{mainprop} in Section \ref{main}.

\section{The main building block}

The results of \cite{MO} are obtained from three fundamental building blocks, as explained in Section 4 of that paper; they allow us to acquire higher dimensional information from lower dimensional phenomena. The proof of Theorem \ref{mainprop} relies on a one-dimensional bound similar to Proposition 4.4 of \cite{MO}.

Let $h_{1}$ and $h_{2}$ be functions supported on $[0,1]$. To keep our notation consistent with the one from the previous section, let $\varphi_{I}$ denote a bump adapted to the interval $I$ and define
$$\varphi_{I}^{n}(\xi):=\frac{1}{|I|^{\frac{1}{2}}}\cdot\varphi_{I}(\xi)\cdot e^{-2\pi in|I|^{-1}\xi}.$$

We have the following bound:

\begin{proposition}\label{bilinear1d-6nov22} For a fixed pair $(I_{1},I_{2})$ with $2^{-k}=|I_{1}|=|I_{2}|\approx d(I_{1},I_{2})$,

\begin{equation*}
S= \sum_{n,m\in\mathbb{Z}} |\langle h_{1}(\cdot)e^{-2\pi im|\cdot|^{2}},\varphi_{I_{1}}^{n}\rangle|^{2}\cdot |\langle h_{2}(\cdot)e^{-2\pi im|\cdot|^{2}},\varphi_{I_{2}}^{n}\rangle|^{2} \lesssim\frac{1}{2^{-2k}}\cdot \|h_{1}\cdot\varphi_{I_{1}}\|_{2}^{2}\cdot \|h_{2}\cdot\varphi_{I_{2}}\|_{2}^{2}.
\end{equation*}
\end{proposition}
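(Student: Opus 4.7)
The plan is to reinterpret $S$ as the squared $\ell^{2}$-norm of samples of the one-dimensional bilinear product
\begin{equation*}
P(x,t) := \mathcal{E}_{1}(h_{1}\varphi_{I_{1}})(x,t)\cdot \mathcal{E}_{1}(h_{2}\varphi_{I_{2}})(x,t),
\end{equation*}
where $\mathcal{E}_{1}$ is the one-dimensional analogue of $\mathcal{E}_{2}$, then to bound $S\lesssim 2^{k}\|P\|_{2}^{2}$ by Poisson summation, and finally to control $\|P\|_{2}^{2}$ by the classical bilinear $L^{2}\times L^{2}\to L^{2}$ estimate for transversal pieces of the parabola.

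A direct calculation from the definition of $\varphi_{I_{j}}^{n}$, together with $|I_{j}|=2^{-k}$, gives
\begin{equation*}
\langle h_{j}(\cdot)e^{-2\pi i m|\cdot|^{2}},\varphi_{I_{j}}^{n}\rangle = 2^{k/2}\cdot \mathcal{E}_{1}(h_{j}\varphi_{I_{j}})(-n\cdot 2^{k},m),\qquad j=1,2,
\end{equation*}
so that $S = 2^{2k}\sum_{n,m\in\mathbb{Z}} |P(-n\cdot 2^{k},m)|^{2}$. To pass from this discrete sum to the integral $\|P\|_{2}^{2}$, I would apply Poisson summation to $|P|^{2}$. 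Since the two-dimensional Fourier transform of each factor $\mathcal{E}_{1}(h_{j}\varphi_{I_{j}})$ lives (essentially) on the arc $\tau=-\xi^{2}$ with $-\xi\in I_{j}$, the Fourier support of $P$ lies in a box of dimensions $O(2^{-k})\times O(2^{-k})$, and that of $|P|^{2}$ in an $O(2^{-k})\times O(2^{-k})$ box centered at the origin. Hence, in the identity
\begin{equation*}
\sum_{n,m\in\mathbb{Z}} |P|^{2}(-n\cdot 2^{k},m) = 2^{-k}\sum_{\ell_{1},\ell_{2}\in\mathbb{Z}} \widehat{|P|^{2}}(\ell_{1}/2^{k},\ell_{2}),
\end{equation*}
only $O(1)$ dual lattice points land in the essential support of $\widehat{|P|^{2}}$, each bounded by $\||P|^{2}\|_{1}=\|P\|_{2}^{2}$, so that $S\lesssim 2^{k}\|P\|_{2}^{2}$.

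To bound $\|P\|_{2}^{2}$, I would expand by Plancherel: integrating out $x$ and $t$ produces $\delta(\xi_{1}-\xi_{1}'+\xi_{2}-\xi_{2}')$ and $\delta(\xi_{1}^{2}-\xi_{1}'^{2}+\xi_{2}^{2}-\xi_{2}'^{2})$, whose joint zero set splits into the branch $\xi_{1}=\xi_{1}'$, $\xi_{2}=\xi_{2}'$ and the branch $\xi_{1}+\xi_{1}'=\xi_{2}+\xi_{2}'$; the latter is ruled out by $I_{1}\cap I_{2}=\emptyset$, leaving only the first. A standard Jacobian computation then yields
\begin{equation*}
\|P\|_{2}^{2} = \frac{1}{2}\int_{I_{1}}\int_{I_{2}} \frac{|h_{1}\varphi_{I_{1}}(\xi_{1})|^{2}\,|h_{2}\varphi_{I_{2}}(\xi_{2})|^{2}}{|\xi_{1}-\xi_{2}|}\,d\xi_{1}d\xi_{2} \lesssim 2^{k}\,\|h_{1}\varphi_{I_{1}}\|_{2}^{2}\,\|h_{2}\varphi_{I_{2}}\|_{2}^{2},
\end{equation*}
via the transversality $|\xi_{1}-\xi_{2}|\gtrsim d(I_{1},I_{2})\approx 2^{-k}$. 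Combining the three steps gives $S\lesssim 2^{2k}\,\|h_{1}\varphi_{I_{1}}\|_{2}^{2}\,\|h_{2}\varphi_{I_{2}}\|_{2}^{2}$, which is the claimed bound.

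The main obstacle is making the Poisson step fully rigorous: because $\varphi_{I_{j}}$ is a smooth bump with Schwartz tails rather than a sharp cutoff, the Fourier support of $\widehat{|P|^{2}}$ is not strictly compact, so one must invoke the rapid decay of $\widehat{\varphi_{I_{j}}}$ to show that dual lattice points lying outside the essential $O(2^{-k})$ neighborhood of the origin contribute only a convergent tail dominated by $\|P\|_{2}^{2}$.
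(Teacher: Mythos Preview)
Your argument is correct and takes a genuinely different route from the paper. The paper rewrites $S=\sum_{n,m}|\langle h_{1}\otimes h_{2},\Phi^{n,m}_{I_{1},I_{2}}\rangle|^{2}$, performs the linear change of variables $(\alpha,\beta)=(\xi-\eta,\xi+\eta)$ so that the phase becomes $e^{2\pi i n 2^{k}\alpha}\,e^{2\pi i m\alpha\beta}$, and then applies Bessel's inequality twice: once in $n$ against the almost-orthogonal family $\widetilde{\varphi}^{n}(\alpha)$, and once in $m$, where the extra factor $2^{k}$ enters through $|\alpha|\approx 2^{-k}$. You instead identify $S$ with lattice samples of the continuous bilinear product $P$, use Poisson summation and the compact Fourier support of $|P|^{2}$ to pass to $\|P\|_{2}^{2}$, and then invoke the classical bilinear $L^{2}\times L^{2}\to L^{2}$ extension estimate for separated arcs of the parabola. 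Both proofs extract the same transversality $|\xi_{1}-\xi_{2}|\approx 2^{-k}$; the paper's version is self-contained and stays entirely in the discrete picture, while yours is more conceptual in that it reduces the statement directly to a known continuous inequality.

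One small correction to your closing remark: in the paper's conventions the bumps $\varphi_{I_{j}}$ are smooth and \emph{compactly} supported on a slight enlargement of $I_{j}$, not merely Schwartz. Hence the Fourier support of $|P|^{2}$ is genuinely compact, and your Poisson step is already rigorous once you note that $|P|^{2}\in L^{1}\cap C^{0}$ with compactly supported Fourier transform; the Schwartz-tail obstruction you flag does not actually arise. The only mild caveat is that the second branch in your Plancherel computation is excluded not quite by $I_{1}\cap I_{2}=\emptyset$ but by the disjointness of the (slightly enlarged) supports of $\varphi_{I_{1}}$ and $\varphi_{I_{2}}$, which the hypothesis $d(I_{1},I_{2})\approx 2^{-k}$ guarantees.
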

\begin{proof} Define
\begin{equation*}
\eqalign{
\Phi^{n,m}_{I_{1},I_{2}}(\xi,\eta) &\displaystyle := e^{-2\pi im\xi^{2}}\cdot e^{2\pi im\eta^{2}}\cdot\varphi_{I_{1}}^{n}(\xi)\cdot\overline{\varphi_{I_{2}}^{n}(\eta)} \cr
&\displaystyle = e^{-2\pi im\xi^{2}}\cdot e^{2\pi im\eta^{2}}\cdot \left(\frac{1}{|I_{1}|^{\frac{1}{2}}}\cdot\varphi_{I_{1}}(\xi)\cdot e^{-2\pi in2^{k}\xi}\right)\cdot \left(\frac{1}{|I_{2}|^{\frac{1}{2}}}\cdot\varphi_{I_{2}}(\eta)\cdot e^{2\pi in2^{k}\eta}\right) \cr
&\displaystyle = \frac{1}{2^{-k}}\cdot e^{-2\pi i 2^{k}(\xi-\eta)}\cdot e^{-2\pi im(\xi-\eta)(\xi+\eta)}\cdot\varphi_{I_{1}}(\xi)\cdot\varphi_{I_{2}}(\eta). \cr
}
\end{equation*}

Hence $S$ can be rewritten as
\begin{equation*}
S=\sum_{n,m\in\mathbb{Z}} |\langle h_{1}\otimes h_{2},\Phi^{n,m}_{I_{1},I_{2}}\rangle|^{2}.
\end{equation*}

Observe that by performing the change of variables $\alpha=\xi-\eta$, $\beta=\xi+\eta$ we obtain $|\alpha|\approx 2^{-k}$, the region $I_{1}\times I_{2}$ gets mapped (linearly) to a compact $U\subset\mathbb{R}^{2}$ and
\begin{equation*}
    \eqalign{
    \displaystyle|&\displaystyle\langle h_{1}\otimes h_{2}, \Phi^{n,m}_{I_{1},I_{2}}\rangle|^{2} \cr &\displaystyle= \left|\iint_{I_{1}\times I_{2}} h_{1}(\xi)h_{2}(\eta)\cdot\frac{1}{2^{-k}}\cdot e^{2\pi i n2^{k}(\xi-\eta)}\cdot e^{2\pi im(\xi-\eta)(\xi+\eta)}\cdot\varphi_{I_{1}}(\xi)\cdot\varphi_{I_{2}}(\eta)\mathrm{d}\xi\mathrm{d}\eta\right|^{2} \cr
    &\displaystyle \approx \left|\iint_{U} h_{1}\left(\frac{\beta+\alpha}{2}\right)h_{2}\left(\frac{\beta-\alpha}{2}\right)\cdot\frac{1}{2^{-k}}\cdot e^{2\pi i n2^{k}\alpha}\cdot e^{2\pi im\alpha\beta}\cdot\varphi_{I_{1}}\left(\frac{\beta+\alpha}{2}\right)\cdot\varphi_{I_{2}}\left(\frac{\beta-\alpha}{2}\right)\mathrm{d}\alpha\mathrm{d}\beta\right|^{2} \cr
     &\displaystyle \approx \frac{1}{2^{-k}}\left|\iint_{U} h_{1}\left(\frac{\beta+\alpha}{2}\right)h_{2}\left(\frac{\beta-\alpha}{2}\right)\cdot\frac{1}{2^{-\frac{k}{2}}}\cdot e^{2\pi in 2^{k}\alpha}\cdot \widetilde{\varphi}(\alpha) \cdot e^{2\pi im\alpha\beta}\cdot\varphi_{I_{1}}\left(\frac{\beta+\alpha}{2}\right)\cdot\varphi_{I_{2}}\left(\frac{\beta-\alpha}{2}\right)\mathrm{d}\alpha\mathrm{d}\beta\right|^{2}, \cr
    }
\end{equation*}
where we inserted a bump $\widetilde{\varphi}$ that is $1$ on the support of the integrand in $\alpha$, but supported on a fixed dilate of it. This is done so we can fit $U$ inside a dilate\footnote{If $c>0$, $cI$ denotes the interval that has the same center as $I$ and size $c|I|$.} $cI_{1}\times cI_{2}$ (with $c$ independent of $I_{1}$ and $I_{2}$), which is possible since $|\alpha|,|\beta|\approx 2^{-k}$. The family of bumps given by
$$\widetilde{\varphi}^{n}(\alpha):=\frac{1}{2^{-\frac{k}{2}}}\cdot e^{2\pi in 2^{k}\alpha}\cdot \widetilde{\varphi}(\alpha) $$
is $L^{2}$-normalized (up to a universal constant independent of $k$ and $I$). Define

$$H(\alpha,\beta):= h_{1}\left(\frac{\beta+\alpha}{2}\right)h_{2}\left(\frac{\beta-\alpha}{2}\right) \varphi_{I_{1}}\left(\frac{\beta+\alpha}{2}\right)\varphi_{I_{2}}\left(\frac{\beta-\alpha}{2}\right)$$
and, by using Bessel twice (in $n$ and in $m$, as we did in Section 7 of \cite{MO}), we conclude that
\begin{equation*}
\eqalign{
\displaystyle\sum_{n,m\in\mathbb{Z}^{2}} |\langle h_{1}\otimes h_{2},\Phi^{n,m}_{I_{1},I_{2}}\rangle|^{2} &\displaystyle\lesssim \frac{1}{2^{-k}}\cdot \sum_{m\in\mathbb{Z}}\sum_{n\in\mathbb{Z}}\left|\int_{cI_{1}}\left(\int_{cI_{2}} H(\alpha,\beta) \cdot e^{2\pi im\alpha\beta}\mathrm{d}\beta\right)\cdot \widetilde{\varphi}^{n}(\alpha)\mathrm{d}\alpha\right|^{2} \cr
&\lesssim\displaystyle \frac{1}{2^{-k}}\cdot\frac{1}{2^{-k}}\cdot\|H\|_{2}^{2} \cr
&\lesssim\displaystyle \frac{1}{2^{-2k}}\cdot \|h_{1}\cdot\varphi_{I_{1}}\|_{2}^{2}\cdot \|h_{2}\cdot\varphi_{I_{2}}\|_{2}^{2}, \cr
}
\end{equation*}
where this extra $\frac{1}{2^{-k}}$ factor comes from applying Bessel when summing in $m$ since $|\alpha|\approx 2^{-k}$.
\end{proof}

\section{Proof of the main theorem}\label{main}
For convenience of the reader, we start by briefly recalling and adapting the notation from our earlier work \cite{MO}. A few new objects particularly related to the problem at hand are also introduced.

\begin{itemize}
\item The index $Q$ (that could be either an interval or a rectangle) in $\varphi_{Q}$ and $\widetilde{\varphi}_{Q}$ indicates that these are \textit{$L^{\infty}$-normalized} bumps adapted to $Q$; in other words, they are $\equiv 1$ on $Q$, smooth and supported on a slight enlargement of $Q$.
\item The bumps $\varphi_{I}^{n}$ with a superscript $n$ are \textit{$L^{2}$-normalized} and given by
$$\varphi_{I}^{n}(\xi):=\frac{1}{|I|^{\frac{1}{2}}}\cdot\varphi_{I}(\xi)\cdot e^{-2\pi in|I|^{-1}\xi}.$$

Their two-dimensional analogues are
$$\varphi_{I_{1}\times J_{1}}^{\overrightarrow{\textit{\textbf{n}}}}(\xi) := \frac{1}{|I_{1}\times J_{1}|^{\frac{1}{2}}}\cdot \varphi_{I_{1}\times J_{1}}(\xi)\cdot e^{-2\pi i \overrightarrow{\textit{\textbf{n}}}\cdot (2^{k_{1}}\xi_{1},2^{k_{2}}\xi_{2})}.$$.
\item We swap $Q$ and $\overrightarrow{\textit{\textbf{n}}}$ to indicate the Fourier transform of $\varphi_{I_{1}\times J_{1}}^{\overrightarrow{\textit{\textbf{n}}}}$:
$$\varphi^{Q}_{\overrightarrow{\textit{\textbf{n}}}}(x) := \widehat{\varphi_{Q}^{\overrightarrow{\textit{\textbf{n}}}}}(x). $$
\item The index $\xi_{i}$ in $\langle \cdot,\cdot\rangle_{\xi_{i}}$ indicates that the inner product is an integral in the variable $\xi_{i}$ only. For instance,
\begin{equation}\label{ex1-211221}
\langle f,\varphi\rangle_{\xi_{1}} := \int_{\mathbb{R}}f(\xi_{1},\xi_{2})\cdot\overline{\varphi}(\xi_{1})\mathrm{d}\xi_{1}
\end{equation}
is now a function of the variables $\xi_{2}$.
\item The upper index in $f^{\xi_{i}}$ indicates that the variable $\xi_{i}$ is fixed. For instance, if $f$ is a function of $(\xi_{1},\xi_{2})$, in
$$\langle f^{\xi_{1}},\varphi\rangle_{\xi_{2}} $$
the scalar product is an integral in $\xi_{2}$ only, hence $\xi_{1}$ is fixed.

\item The expression $\left\|\langle f,\cdot\rangle_{\xi_{i}}\right\|_{2} $ is the $L^{2}$ norm of a function in the variable $\xi_{l}$, $l\neq i$. To illustrate using \eqref{ex1-211221},
$$\|\langle f,\varphi\rangle_{\xi_{1}}\|_{2} = \left[\int_{\mathbb{R}}\left|\int_{\mathbb{R}}f(\xi_{1},\xi_{2})\cdot\overline{\varphi}(\xi_{1})\mathrm{d}\xi_{1}\right|^{2}\mathrm{d}\xi_{2}\right]^{\frac{1}{2}}.
 $$
 \item We will work on the space $\Omega = \mathbb{Z}^{3}\times\mathcal{D}^{k_{1}}_{[0,1]}\times \mathcal{D}^{k_{2}}_{[0,1]}\times\mathcal{D}^{k_{1}}_{[0,1]}\times \mathcal{D}^{k_{2}}_{[0,1]}$. Because of the properties of the Whitney decomposition from Section \ref{reductionmodel}, the subset of $\Omega$ that will appear in this section depends on $5$ parameters, not 7; this is because a pair $(I_{1},J_{1})\in\mathcal{D}^{k_{1}}_{[0,1]}\times \mathcal{D}^{k_{2}}_{[0,1]}$ determines the $(I_{2},J_{2})\in\mathcal{D}^{k_{1}}_{[0,1]}\times \mathcal{D}^{k_{2}}_{[0,1]}$ (and vice-versa) that appears in any vector $(n_{1},n_{2},m,I_{1},J_{1},I_{2},J_{2})\in\Omega$. In that context, if $A\subset\Omega$ and $\mathbbm{1}_{A}$ denotes the indicator function of $A$, we define:
 
 $$\|\mathbbm{1}_{A}\|_{\ell^{\infty}_{n_{1},m,I_{2}}\ell^{1}_{n_{2},J_{2}}} := \sup_{(n_{1},m,I_{2})\in\mathbb{Z}^{2}\times\mathcal{D}^{k_{1}}_{[0,1]}}\sum_{(n_{2},J_{2})\in\mathbb{Z}\times\mathcal{D}^{k_{2}}_{[0,1]}}\mathbbm{1}_{A}(n_{1},n_{2},m,I_{1},J_{1},I_{2},J_{2}).$$
 
 Notice that the right-hand side does not depend on any free parameter by our previous observation. The quantity $\|\mathbbm{1}_{A}\|_{\ell^{\infty}_{n_{2},m,J_{2}}\ell^{1}_{n_{1},J_{1}}}$ is defined analogously.
\end{itemize}

As explained in the end of Section \ref{reductionmodel}, the following bound implies Theorem \ref{mainthm}:

\begin{theorem}\label{mainprop} Given $\varepsilon>0$,
$$\|\widetilde{T}_{k_{1},k_{2}}(f,g)\|_{2+\varepsilon^{\prime}}\lesssim_{\varepsilon} 2^{-\frac{\varepsilon}{3} (k_{1}+k_{2})}\|f\|_{2}\cdot\|g\|_{2}, $$
where $\varepsilon^{\prime}=\frac{8\varepsilon}{1-2\varepsilon}$.
\end{theorem}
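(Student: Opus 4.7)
The plan is to establish two bounds on $\widetilde{T}_{k_1,k_2}(f,g)$ with opposing behaviors in the Whitney scale $k_1+k_2$ and then interpolate.

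The first is a crude $L^{\infty}$-estimate. Undoing the Fourier-series discretizations leading up to the definition of $\widetilde{T}_{k_1,k_2}$, this operator is essentially $\sum_{I_1\sim I_2,\,J_1\sim J_2}\mathcal{E}_2(f_{I_1\times J_1})\cdot\mathcal{E}_2(g_{I_2\times J_2})$. For any Whitney cube $Q$ of size $2^{-k_1}\times 2^{-k_2}$ one has the trivial pointwise bound $\|\mathcal{E}_2(h_Q)\|_{\infty}\leq\|h_Q\|_1\leq |Q|^{1/2}\|h_Q\|_2=2^{-(k_1+k_2)/2}\|h_Q\|_2$; combining this with Cauchy--Schwarz over $(I_1,J_1)$---using that each $(I_1,J_1)$ determines a unique partner $(I_2,J_2)$---and the standard $\sum_Q\|f_Q\|_2^2\lesssim\|f\|_2^2$ yields
\[
\|\widetilde{T}_{k_1,k_2}(f,g)\|_{\infty}\lesssim 2^{-(k_1+k_2)}\|f\|_2\|g\|_2.
\]

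The second is an $L^{2}$-type bound obtained via Proposition~\ref{bilinear1d-6nov22}. Applying Plancherel in $(x,t)$, together with the almost-disjointness in Fourier space of distinct Whitney pieces and the almost-orthogonality of the tiles $\varphi^{I_1\times J_1}_{\overrightarrow{\textit{\textbf{n}}}}\cdot\varphi^{I_2\times J_2}_{\overrightarrow{\textit{\textbf{n}}}}\cdot\chi_m$ for varying $(\overrightarrow{\textit{\textbf{n}}},m)$, reduces the $L^{2}$ norm squared of $\widetilde{T}_{k_1,k_2}$ to a weighted sum of the form
\[
2^{-(k_1+k_2)}\sum_{I_1,J_1,\overrightarrow{\textit{\textbf{n}}},m}|\langle fe^{-2\pi im|\cdot|^2},\varphi_{I_1\times J_1}^{\overrightarrow{\textit{\textbf{n}}}}\rangle|^{2}\cdot|\langle ge^{-2\pi im|\cdot|^2},\varphi_{I_2\times J_2}^{\overrightarrow{\textit{\textbf{n}}}}\rangle|^{2}.
\]
The tensor factorizations $e^{-2\pi im|\xi|^2}=e^{-2\pi im\xi_1^2}\,e^{-2\pi im\xi_2^2}$ and $\varphi_{I\times J}^{(n_1,n_2)}(\xi)=\varphi_{I}^{n_1}(\xi_1)\,\varphi_{J}^{n_2}(\xi_2)$ allow each coefficient to be split into an iterated $(\xi_1,\xi_2)$-pairing: integrate first in $\xi_1$ against $\varphi_{I_1}^{n_1}$ (resp.\ $\varphi_{I_2}^{n_1}$) to obtain functions of $\xi_2$, then pair these in $\xi_2$ against $\varphi_{J_1}^{n_2}$ (resp.\ $\varphi_{J_2}^{n_2}$). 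Bessel's inequality in $n_2$ collapses the outer pairing, and Proposition~\ref{bilinear1d-6nov22} applied to the inner $\xi_1$-pairing (with the fixed $\xi_2$ treated as a parameter) controls the remaining joint sum over $(n_1,m)$, yielding a bound by $2^{\kappa(k_1+k_2)}\|f\varphi_{I_1\times J_1}\|_2^2\|g\varphi_{I_2\times J_2}\|_2^2$ for some explicit exponent $\kappa$. A further Cauchy--Schwarz over Whitney pairs $(I_1,J_1)$ then produces an $L^{2}$-estimate of the form $\|\widetilde{T}_{k_1,k_2}\|_2\lesssim 2^{c(k_1+k_2)}\|f\|_2\|g\|_2$ with an explicit $c$.

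Complex interpolation between these two estimates yields the desired $L^{2+\varepsilon'}$-bound; the specific relation $\varepsilon'=8\varepsilon/(1-2\varepsilon)$ and the decay factor $2^{-\varepsilon(k_1+k_2)/3}$ emerge from the algebra of the interpolation parameters, and the restriction $\varepsilon>0$ is precisely what guarantees a strictly positive exponent in $k_1+k_2$ (so that the geometric series over $k_1,k_2$ converges, aligning with the paper's footnote on not reaching the endpoint). The main obstacle will be the second step: Proposition~\ref{bilinear1d-6nov22} is a one-dimensional statement, so exploiting the tensor structure of both the paraboloid phase and of the bumps is essential to extract a genuinely two-dimensional estimate without losing too much in the constants; the almost-orthogonality statements underlying the Plancherel reduction (across Whitney pairs and across $(\overrightarrow{\textit{\textbf{n}}},m)$) are also delicate, since the tiles merely decay rather than vanish outside their concentration rectangles.
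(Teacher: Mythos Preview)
Your two-endpoint strategy is sound and, once the details are in place, gives a more elementary route than the paper's proof. The paper does not argue via a direct $L^2$/$L^\infty$ interpolation; instead it dualizes $\widetilde{T}_{k_1,k_2}$ to a trilinear form, introduces several families of level sets ($\mathbb{A}^{l_i}_i$, $\mathbb{B}^{r_i}_i$, $\mathbb{C}^{s_i}_i$, $\mathbb{D}^t$) encoding the sizes of various one- and two-dimensional inner products, proves a lemma relating these via Bessel, and then combines four separate cardinality bounds on the resulting level set with carefully tuned weights $\theta_1,\dots,\theta_4$ before a final restricted-weak-type interpolation. Proposition~\ref{bilinear1d-6nov22} enters the paper's argument at the same place as in yours (to control the sets $\mathbb{B}^{r_i}_i\cap\mathbb{C}^{s_i}_i$), so the analytic core is shared; the packaging, however, is quite different. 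The paper's level-set machinery is imported from \cite{MO} and presumably designed with extensions beyond the $L^2\times L^2$ setting in mind, whereas your argument is tailored to the situation where $p=2$ is exactly the exponent at which almost-orthogonality yields a clean Plancherel reduction.

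Two points in your $L^2$ step need attention, and the second is not merely cosmetic. First, ``Bessel in $n_2$'' has to be applied to the product $a_{n_2}b_{n_2}=\langle A\otimes B,\varphi_{J_1}^{n_2}\otimes\varphi_{J_2}^{n_2}\rangle$ viewed as a single inner product against the almost-orthonormal diagonal family $\{\varphi_{J_1}^{n_2}\otimes\varphi_{J_2}^{n_2}\}_{n_2}$, since both factors depend on $n_2$; this produces $\|A\|_{L^2(cJ_1)}^2\|B\|_{L^2(cJ_2)}^2$ with \emph{two independent} $\xi_2$-parameters (one for $f$, one for $g$), not a single ``fixed $\xi_2$''. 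Second, applying Proposition~\ref{bilinear1d-6nov22} in the $\xi_1$-direction gives the asymmetric factor $2^{2k_1}$, not something of the form $2^{\kappa(k_1+k_2)}$; after the $2^{-(k_1+k_2)}$ from almost-orthogonality you obtain $\|\widetilde{T}_{k_1,k_2}\|_2\lesssim 2^{(k_1-k_2)/2}\|f\|_2\|g\|_2$, which \emph{grows} when $k_1>k_2$. You must also run the symmetric argument (Bessel in $n_1$, Proposition~\ref{bilinear1d-6nov22} for the $(n_2,m)$-sum) and take the minimum of the two bounds to conclude $c=0$; without this, interpolation only reaches $L^p$ for $p$ bounded away from $2$. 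The paper faces the identical asymmetry and resolves it by assigning equal weights $\theta_1=\theta_2$ to the two mirror-image estimates \eqref{bound3-22oct22} and \eqref{bound4-22oct22}.
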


\begin{proof}[Proof of Theorem \ref{mainprop}] In what follows, let $E_{1}\subset Q_{1}$,$E_{2}\subset Q_{2}$, and $F\subset\mathbb{R}^{d+1}$ be measurable sets such that $|f|\leq\chi_{E_{1}}$, $|g|\leq\chi_{E_{2}}$ and $|H|\leq\chi_{F}$. We define the associated trilinear form obtained by dualizing $\widetilde{T}_{k_{1},k_{2}}$:

\begin{equation}\label{form-5nov22}
\eqalign{
\widetilde{\Lambda}_{k_{1},k_{2}}&(f,g,H) \cr
&\displaystyle:= \sum_{\substack{ I_{1}\sim I_{2}\in\mathcal{D}_{[0,1]}^{k_{1}} \\ J_{1}\sim J_{2}\in\mathcal{D}_{[0,1]}^{k_{2}}}}\sum_{(\overrightarrow{\textit{\textbf{n}}},m)\in\mathbb{Z}^{3}} \langle f(\cdot)e^{-2\pi im|\cdot|^{2}},\varphi_{I_{1}\times J_{1}}^{\overrightarrow{\textit{\textbf{n}}}}\rangle\cdot \langle g(\cdot)e^{-2\pi im|\cdot|^{2}},\varphi_{I_{2}\times J_{2}}^{\overrightarrow{\textit{\textbf{n}}}}\rangle \cdot\langle H,\varphi^{I_{1}\times J_{1}}_{\overrightarrow{\textit{\textbf{n}}}}\cdot \varphi^{I_{2}\times J_{2}}_{\overrightarrow{\textit{\textbf{n}}}}\otimes \chi_{m}\rangle. \cr
}
\end{equation}

Interpolation theory shows that to prove the theorem above it suffices to show that
\begin{equation}\label{restweaktypebdd}
|\Lambda_{k_{1},k_{2}}(f,g,H)|\lesssim_{\varepsilon}2^{-\frac{\varepsilon}{4} (k_{1}+k_{2})} |E_{1}|^{\gamma_{1}}\cdot |E_{2}|^{\gamma_{2}}\cdot |F|^{\gamma_{3}} 
\end{equation}
for all $\varepsilon>0$, where $\gamma_{j}$ $(1\leq j\leq 2)$ and $\gamma_{3}$ are in a small neighborhood of $\frac{1}{2}$ and $\frac{1}{2}+\varepsilon$, respectively\footnote{We refer the reader to Chapter 3 of \cite{Thiele1} for a detailed account of multilinear interpolation theory.}. To keep the notation simple, the restricted weak-type estimate that we will prove will be for the centers of such neighborhoods. In other words, we will show that

\begin{equation*}
|\Lambda_{k_{1},k_{2}}(f,g,H)|\lesssim_{\varepsilon}2^{-\frac{\varepsilon}{4} (k_{1}+k_{2})} |E_{1}|^{\frac{1}{2}}\cdot |E_{2}|^{\frac{1}{2}}\cdot |F|^{\frac{1}{2}+\frac{2\varepsilon}{1+2\varepsilon}}
\end{equation*}
for all $\varepsilon>0$, but it will be clear from the arguments that as long as we give this $\varepsilon>0$ away, a slightly different choice of interpolation parameters yields \eqref{restweaktypebdd}.

We will define several level sets that encode the sizes of many quantities that will play a role in the proof. We start with the ones involving the scalar products in the trilinear form above.

$$\mathbb{A}^{l_{1}}_{1}=\left\{(\overrightarrow{\textit{\textbf{n}}},m, I_{1},J_{1})\in \mathbb{Z}^{3}\times \mathcal{D}^{k_{1}}_{[0,1]}\times \mathcal{D}^{k_{2}}_{[0,1]} : |\langle f(\cdot)e^{-2\pi im|\cdot|^{2}},\varphi_{I_{1}\times J_{1}}^{\overrightarrow{\textit{\textbf{n}}}}\rangle| \approx 2^{-l_{1}}\right\},$$
$$\mathbb{A}^{l_{2}}_{2}=\left\{(\overrightarrow{\textit{\textbf{n}}},m, I_{2},J_{2})\in \mathbb{Z}^{3}\times \mathcal{D}^{k_{1}}_{[0,1]}\times \mathcal{D}^{k_{2}}_{[0,1]} : |\langle g(\cdot)e^{-2\pi im|\cdot|^{2}},\varphi_{I_{2}\times J_{2}}^{\overrightarrow{\textit{\textbf{n}}}}\rangle| \approx 2^{-l_{2}}\right\}. $$

The two-dimensional scalar products above are not the only information that we will need to control. As we will see, some mixed-norm quantities appear naturally after using Bessel's inequality along certain directions, and we will need to capture these as well:

$$\mathbb{B}^{r_{1}}_{1}=\left\{(n_{1},m,I_{1})\in \mathbb{Z}^{2}\times \mathcal{D}^{k_{1}}_{[0,1]}:\left\|\left\langle f^{\xi_{2}}(\cdot)e^{-2\pi im|\cdot|^{2}},\varphi^{n_{1}}_{I_{1}}\right\rangle_{\xi_{1}}\right\|_{L^{2}_{\xi_{2}}}\approx 2^{-r_{1}}\right\},$$

$$\mathbb{B}^{r_{2}}_{2}=\left\{(n_{2},m,J_{1})\in \mathbb{Z}^{2}\times \mathcal{D}^{k_{2}}_{[0,1]}:\left\|\left\langle f^{\xi_{1}}(\cdot)e^{-2\pi im|\cdot|^{2}},\varphi^{n_{2}}_{J_{1}}\right\rangle_{\xi_{2}}\right\|_{L^{2}_{\xi_{1}}}\approx 2^{-r_{2}}\right\},$$

$$\mathbb{C}^{s_{1}}_{1}=\left\{(n_{1},m,I_{2})\in \mathbb{Z}^{2}\times \mathcal{D}^{k_{1}}_{[0,1]}:\left\|\left\langle g^{\xi_{2}}(\cdot)e^{-2\pi im|\cdot|^{2}},\varphi^{n_{1}}_{I_{2}}\right\rangle_{\xi_{1}}\right\|_{L^{2}_{\xi_{2}}}\approx 2^{-s_{1}}\right\},$$

$$\mathbb{C}^{s_{2}}_{2}=\left\{(n_{2},m,J_{2})\in \mathbb{Z}^{2}\times \mathcal{D}^{k_{2}}_{[0,1]}:\left\|\left\langle g^{\xi_{1}}(\cdot)e^{-2\pi im|\cdot|^{2}},\varphi^{n_{2}}_{J_{2}}\right\rangle_{\xi_{2}}\right\|_{L^{2}_{\xi_{1}}}\approx 2^{-s_{2}}\right\}.$$

The last quantity we have to control is the one arising from the dualizing function $H$:
$$\mathbb{D}^{t}=\{(\overrightarrow{\textit{\textbf{n}}},m, I_{1},J_{1},I_{2},J_{2})\in \mathbb{Z}^{3}\times \mathcal{D}^{k_{1}}_{[0,1]}\times \mathcal{D}^{k_{2}}_{[0,1]}\times \mathcal{D}^{k_{1}}_{[0,1]}\times \mathcal{D}^{k_{2}}_{[0,1]}: |\langle H,\varphi^{I_{1}\times J_{1}}_{\overrightarrow{\textit{\textbf{n}}}}\cdot \varphi^{I_{2}\times J_{2}}_{\overrightarrow{\textit{\textbf{n}}}}\otimes \chi_{m}\rangle| \approx 2^{-t}\}. $$

We remark that a fixed $I_{1}$ determines $I_{2}$, as well as a fixed $J_{1}$ determines $J_{2}$, hence the sum in \eqref{form-5nov22} depends on five parameters (two intervals and three integers). This is because the cubes $I_{1}\times I_{2}$ and $J_{1}\times J_{2}$ are assumed to be in a fixed strip, as one can see from the previous Whitney decomposition.

In order to prove some crucial bounds that will play an important role later on, we will have to isolate the previous information for only one of the functions $f$ and $g$. This will be done in terms of the following sets: 

$$\mathbb{X}^{l_{1},r_{1}}=\mathbb{A}^{l_{1}}_{1}\cap\{(\overrightarrow{\textit{\textbf{n}}},m, I_{1},J_{1}); (n_{1},m,I_{1})\in\mathbb{B}^{r_{1}}_{1}\}, \qquad\mathbb{X}^{l_{1},r_{2}}=\mathbb{A}^{l_{1}}_{1}\cap\{(\overrightarrow{\textit{\textbf{n}}},m,I_{1},J_{1}); (n_{2},m,J_{1})\in\mathbb{B}^{r_{2}}_{2}\}, $$
$$\mathbb{X}^{l_{2},s_{1}}=\mathbb{A}^{l_{2}}_{2}\cap\{(\overrightarrow{\textit{\textbf{n}}},m,I_{2},J_{2}); (n_{1},m,I_{2})\in\mathbb{C}^{s_{1}}_{1}\}, \qquad\mathbb{X}^{l_{2},s_{2}}=\mathbb{A}^{l_{2}}_{2}\cap\{(\overrightarrow{\textit{\textbf{n}}},m,I_{2},J_{2}); (n_{2},m,J_{2})\in\mathbb{C}^{s_{2}}_{2}\}. $$

In other words, the set $\mathbb{X}^{l_{1},r_{1}}$ for instance contains all the $(\overrightarrow{\textit{\textbf{n}}},m,I_{1},J_{1})$ whose corresponding scalar product 
$$\langle f(\cdot)e^{-2\pi im|\cdot|^{2}},\varphi_{I_{1}\times J_{1}}^{\overrightarrow{\textit{\textbf{n}}}}\rangle$$
has size about $2^{-l_{1}}$ and with $(n_{1},m,I_{1})$ being such that 
$$\left\|\left\langle f^{\xi_{2}}(\cdot)e^{-2\pi im|\cdot|^{2}},\varphi^{n_{1}}_{I_{1}}\right\rangle_{\xi_{1}}\right\|_{L^{2}_{\xi_{2}}}$$
has size about $2^{-r_{1}}$.

Finally, it will also be important to encode all the previous information into one single set. This will be done with

$$\mathbb{X}^{\overrightarrow{l},\overrightarrow{r},\overrightarrow{s},t}=\{(\overrightarrow{\textit{\textbf{n}}},m,I_{1},J_{1},I_{2},J_{2});\quad (\overrightarrow{\textit{\textbf{n}}},m,I_{1},J_{1})\in \mathbb{X}^{l_{1},r_{1}}\cap \mathbb{X}^{l_{1},r_{2}}, (\overrightarrow{\textit{\textbf{n}}},m,I_{2},J_{2})\in \mathbb{X}^{l_{2},s_{1}}\cap \mathbb{X}^{l_{2},s_{2}}  \}\cap \mathbb{D}^{t}, $$
where we are using the abbreviations $\overrightarrow{l}=(l_{1},l_{2})$, $\overrightarrow{r}=(r_{1},r_{2})$ and
$\overrightarrow{s}=(s_{1},s_{2})$. Hence we can bound the form $\widetilde{\Lambda}_{k_{1},k_{2}}$ by

\begin{equation}\label{ineq1-2nov22}
|\widetilde{\Lambda}_{k_{1},k_{2}}(f,g,H)|\lesssim\sum_{\substack{\overrightarrow{l},\overrightarrow{r},\overrightarrow{s}\in\mathbb{Z}^{2} \\ t\in\mathbb{Z}}}2^{-l_{1}}\cdot 2^{-l_{2}}\cdot 2^{-t}\cdot\#\mathbb{X}^{\overrightarrow{l},\overrightarrow{r},\overrightarrow{s},t}.
\end{equation}

The sums in \eqref{ineq1-2nov22} are not really over all integers. For instance, if $\mathbb{X}^{\overrightarrow{l},\overrightarrow{r},\overrightarrow{s},t}\neq\emptyset$, one has
$$2^{-l_{1}}\lesssim 2^{-\frac{(k_{1}+k_{2})}{2}},$$
so we can assume without loss of generality that $l_{1}\geq 0$ (and similarly for the other sums).

The following lemma plays a crucial role in the argument by relating the scalar and mixed-norm quantities involved in the problem.

\begin{lemma}If $\mathbb{X}^{\overrightarrow{l},\overrightarrow{r},\overrightarrow{s},t}\neq\emptyset$, then
\begin{equation}
    \eqalign{
\displaystyle 2^{-l_{1}}\lesssim 2^{-r_{1}}, &\displaystyle\quad 2^{-l_{1}}\lesssim \frac{2^{-r_{2}}}{\|\mathbbm{1}_{\mathbb{X}^{l_{1},r_{2}}}\|^{\frac{1}{2}}_{\ell^{\infty}_{n_{2},m,J_{1}}\ell^{1}_{n_{1},I_{1}}}}, \cr
\displaystyle 2^{-l_{2}}\lesssim \frac{2^{-s_{1}}}{\|\mathbbm{1}_{\mathbb{X}^{l_{2},s_{1}}}\|^{\frac{1}{2}}_{\ell^{\infty}_{n_{1},m,I_{2}}\ell^{1}_{n_{2},J_{2}}}}, &\displaystyle\quad 2^{-l_{2}}\lesssim 2^{-s_{2}}. \cr
}
\end{equation}
\end{lemma}
\begin{proof} By the triangle inequality and Cauchy-Schwarz,
\begin{equation*}
\eqalign{
    2^{-l_{1}}\approx\displaystyle |\langle f(\cdot)e^{-2\pi im|\cdot|^{2}},\varphi_{I_{1}\times J_{1}}^{\overrightarrow{\textit{\textbf{n}}}}\rangle| &\displaystyle\leq \frac{1}{|J_{1}|^{\frac{1}{2}}}\cdot\left\|\left\langle f^{\xi_{2}}(\cdot)e^{-2\pi im|\cdot|^{2}},\varphi^{n_{1}}_{I_{1}}\right\rangle_{\xi_{1}}\right\|_{L^{1}_{\xi_{2}}(J_{1})} \cr
    &\displaystyle\lesssim \frac{1}{|J_{1}|^{\frac{1}{2}}}\cdot \left\|\left\langle f^{\xi_{2}}(\cdot)e^{-2\pi im|\cdot|^{2}},\varphi^{n_{1}}_{I_{1}}\right\rangle_{\xi_{1}}\right\|_{L^{2}_{\xi_{2}}}\cdot |J_{1}|^{\frac{1}{2}} \cr
    &\displaystyle \approx 2^{-r_{1}}.\cr
    }
\end{equation*}

The relation above between $2^{-l_{1}}$ and $2^{-r_{2}}$ is a consequence of orthogonality: for a fixed $(n_{2},m,J_{1})$, define 
$$\mathbb{X}^{l_{1},r_{2}}_{(n_{2},m,J_{1})} :=\{ (n_{1},I_{1});\quad (\overrightarrow{\textit{\textbf{n}}},m,I_{1},J_{1})\in \mathbb{X}^{l_{1},r_{2}}\}. $$

This way,
\begin{equation*}
\eqalign{
\#\mathbb{X}^{l_{1},r_{2}}_{(n_{2},m,J_{1})} &\approx\displaystyle 2^{2l_{1}}\sum_{(n_{1},I_{1})\in \mathbb{X}^{l_{1},r_{2}}_{(n_{2},m,J_{1})}} |\langle f(\cdot)e^{-2\pi im|\cdot|^{2}},\varphi_{I_{1}\times J_{1}}^{\overrightarrow{\textit{\textbf{n}}}}\rangle|^{2}\cr
&\displaystyle\displaystyle\leq 2^{2l_{1}}\sum_{I_{1}\in\mathcal{D}^{k_{1}}_{[0,1]}}\sum_{n_{1}\in\mathbb{Z}}\left|\int_{0}^{1} \left\langle f^{\xi_{1}}(\cdot)e^{-2\pi im|\cdot|^{2}},\varphi^{n_{2}}_{J_{1}}\right\rangle_{\xi_{2}}\cdot e^{-2\pi im\xi_{1}^{2}}\cdot \varphi^{n_{1}}_{I_{1}}(\xi_{1})\cdot\mathrm{d}\xi_{1}\right|^2 \cr
&\leq\displaystyle 2^{2l_{1}}\sum_{I_{1}\in\mathcal{D}^{k_{1}}_{[0,1]}}\int_{\textnormal{supp}(\varphi_{I_{1}})} \left|\varphi_{I_{1}}(x_{1})\left\langle f^{x_{1}}(\cdot)e^{-2\pi im|\cdot|^{2}},\varphi^{n_{2}}_{J_{1}}\right\rangle_{\xi_{2}}\right|^{2}\mathrm{d}x_{1} \cr
&\lesssim\displaystyle 2^{2l_{1}}\int_{0}^{1} \left|\left\langle f^{x_{1}}(\cdot)e^{-2\pi im|\cdot|^{2}},\varphi^{n_{2}}_{J_{1}}\right\rangle_{\xi_{2}}\right|^{2}\mathrm{d}x_{1} \cr
&\approx\displaystyle 2^{2l_{1}}\cdot 2^{-2r_{2}},
}
\end{equation*}
where we used Bessel's inequality from the second to the third line. By taking the supremum in $(n_{2},m,J_{1})$ we conclude that
$$2^{-l_{1}}\lesssim \frac{ 2^{-r_{2}}}{\|\mathbbm{1}_{\mathbb{X}^{l_{1},r_{2}}}\|^{\frac{1}{2}}_{\ell^{\infty}_{n_{2},m,J_{1}}\ell^{1}_{n_{1},I_{1}}}}. $$

The other two relations are verified analogously.
\end{proof}

We will need the following convex combinations of the bounds above:

\begin{equation}\label{bound1-22oct22}
2^{-l_{1}}\lesssim \frac{2^{-\frac{1}{2}r_{1}}\cdot 2^{-\frac{1}{2}r_{2}}}{\|\mathbbm{1}_{\mathbb{X}^{l_{1},r_{2}}}\|^{\frac{1}{4}}_{\ell^{\infty}_{n_{2},m,J_{1}}\ell^{1}_{n_{1},I_{1}}}},
\end{equation}

\begin{equation}\label{bound2-22oct22}
2^{-l_{2}}\lesssim \frac{2^{-\frac{1}{2}s_{1}}\cdot 2^{-\frac{1}{2}s_{2}}}{\|\mathbbm{1}_{\mathbb{X}^{l_{2},s_{1}}}\|^{\frac{1}{4}}_{\ell^{\infty}_{n_{1},m,I_{2}}\ell^{1}_{n_{2},J_{2}}}}.
\end{equation}

We now concentrate on estimating the right-hand side of \eqref{ineq1-2nov22} by finding good bounds for $\#\mathbb{X}^{\overrightarrow{l},\overrightarrow{r},\overrightarrow{s},t}$. Observe that the Fourier transform of
\begin{equation}\label{bumps-5nov22}
\varphi^{I_{1}\times J_{1}}_{\overrightarrow{\textit{\textbf{n}}}}\cdot \varphi^{I_{2}\times J_{2}}_{\overrightarrow{\textit{\textbf{n}}}}
\end{equation}
is essentially supported on $I_{1}\times J_{1}+I_{2}\times J_{2}$. Since the scales of $I_{1}, I_{2}, J_{1}, J_{2}$ are fixed, these supports have finite overlap. Notice also that the bumps in \eqref{bumps-5nov22} are $L^{1}$-normalized, hence we conclude that the set
$$\left\{2^{\frac{(k_{1}+k_{2})}{2}}\cdot\left(\varphi^{I_{1}\times J_{1}}_{\overrightarrow{\textit{\textbf{n}}}}\cdot \varphi^{I_{2}\times J_{2}}_{\overrightarrow{\textit{\textbf{n}}}}\right)\right\}_{\overrightarrow{\textit{\textbf{n}}},I_{1},J_{1},I_{2},J_{2}}$$
is an $L^{2}$-normalized almost-orthogonal family\footnote{Recall that we are treating the term with $\#=0$ from \eqref{ineq2-5nov22}. If $\#>0$, the family
$$\left\{\#^{50}\cdot 2^{\frac{(k_{1}+k_{2})}{2}}\cdot\left(\varphi^{I_{1}\times J_{1}}_{\overrightarrow{\textit{\textbf{n}}}}\cdot \varphi^{I_{2}\times J_{2}}_{\overrightarrow{\textit{\textbf{n}}}}\right)\right\}_{\overrightarrow{\textit{\textbf{n}}},I_{1},J_{1},I_{2},J_{2}}$$
would be $L^{2}$-normalized, which would imply \eqref{ineq1-oct1822} with an extra factor of $\#^{-100}$. The geometric series in $\#$ is then summable.}. This way,

\begin{equation}\label{ineq1-oct1822}
\#\mathbb{X}^{\overrightarrow{l},\overrightarrow{r},\overrightarrow{s},t}\lesssim 2^{2t}\cdot 2^{-(k_{1}+k_{2})}\sum_{\substack{(\overrightarrow{\textit{\textbf{n}}},m,I_{1},J_{1},I_{2},J_{2}) \\ \in\textnormal{ }\mathbb{X}^{\overrightarrow{l},\overrightarrow{r},\overrightarrow{s},t}}}\frac{1}{2^{-(k_{1}+k_{2})}}|\langle H,\varphi^{I_{1}\times J_{1}}_{\overrightarrow{\textit{\textbf{n}}}}\cdot \varphi^{I_{2}\times J_{2}}_{\overrightarrow{\textit{\textbf{n}}}}\otimes \chi_{m}\rangle|^{2}\leq 2^{2t}\cdot 2^{-(k_{1}+k_{2})}\cdot |F|.
\end{equation}

Alternatively,
\begin{equation}\label{bound1-6nov22}
\eqalign{
    \#\mathbb{X}^{\overrightarrow{l},\overrightarrow{r},\overrightarrow{s},t}&\displaystyle\lesssim 2^{t}\cdot\sum_{\substack{(\overrightarrow{\textit{\textbf{n}}},m,I_{1},J_{1},I_{2},J_{2}) \\ \in\textnormal{ }\mathbb{X}^{\overrightarrow{l},\overrightarrow{r},\overrightarrow{s},t}}}|\langle H,\varphi^{I_{1}\times J_{1}}_{\overrightarrow{\textit{\textbf{n}}}}\cdot \varphi^{I_{2}\times J_{2}}_{\overrightarrow{\textit{\textbf{n}}}}\otimes \chi_{m}\rangle| \cr
    &\displaystyle\leq 2^{t}\cdot\sum_{\substack{(I_{1},J_{1},I_{2},J_{2}) \\ I_{1}\in\mathcal{D}_{[0,1]}^{k_{1}} \\ J_{1}\in\mathcal{D}_{[0,1]}^{k_{2}}}} \sum_{(\overrightarrow{\textit{\textbf{n}}},m)\in\mathbb{Z}^{3}}|\langle H,\varphi^{I_{1}\times J_{1}}_{\overrightarrow{\textit{\textbf{n}}}}\cdot \varphi^{I_{2}\times J_{2}}_{\overrightarrow{\textit{\textbf{n}}}}\otimes \chi_{m}\rangle| \cr
    &\displaystyle\lesssim 2^{t}\cdot\sum_{\substack{(I_{1},J_{1},I_{2},J_{2}) \\ I_{1}\in\mathcal{D}_{[0,1]}^{k_{1}} \\ J_{1}\in\mathcal{D}_{[0,1]}^{k_{2}}}} \frac{1}{2^{k_{1}+k_{2}}}\|H\|_{1} \cr
    &\displaystyle\lesssim 2^{t}\cdot |F|,\cr
    }
\end{equation}
where we took into account the fact that the family \eqref{bumps-5nov22} is $L^{1}$-normalized and that there are $2^{k_{1}+k_{2}}$ elements in the sum over $(I_{1},J_{1},I_{2},J_{2})$\footnote{We remark that there is no condition on $I_{2}$ and $J_{2}$ in the sum over $(I_{1},J_{1},I_{2},J_{2})$ because, again, they are determined by $I_{1}$ and $J_{1}$, respectively.}.

From the definition of $\mathbb{X}^{\overrightarrow{l},\overrightarrow{r},\overrightarrow{s},t}$ it follows that:
\begin{equation}\label{bound3-22oct22}
\#\mathbb{X}^{\overrightarrow{l},\overrightarrow{r},\overrightarrow{s},t} \leq \|\mathbbm{1}_{\mathbb{X}^{l_{2},s_{1}}}\|_{\ell^{\infty}_{n_{1},m,I_{2}}\ell^{1}_{n_{2},J_{2}}} \cdot \|\mathbbm{1}_{\mathbb{B}^{r_{1}}_{1}\cap\mathbb{C}^{s_{1}}_{1}}\|_{\ell^{1}_{n_{1},m,I_{1}}},
\end{equation}

\begin{equation}\label{bound4-22oct22}
\#\mathbb{X}^{\overrightarrow{l},\overrightarrow{r},\overrightarrow{s},t} \leq \|\mathbbm{1}_{\mathbb{X}^{l_{1},r_{2}}}\|_{\ell^{\infty}_{n_{2},m,J_{1}}\ell^{1}_{n_{1},I_{1}}}\cdot \|\mathbbm{1}_{\mathbb{B}^{r_{2}}_{2}\cap\mathbb{C}^{s_{2}}_{2} }\|_{\ell^{1}_{n_{2},m,J_{2}}}.
\end{equation}

The next step will make it clear what is the gain obtained from our previous considerations: by squaring the extension operator $\mathcal{E}_{2}$ and performing appropriate Whitney decompositions, one obtains objects analogous to the bilinear extension operators discussed in \cite{MO}. For such operators, one can take advantage of \textit{transversality} to prove better bounds than the ones satisfied by $\mathcal{E}_{2}$, which will be done here through Proposition \ref{bilinear1d-6nov22}. Observe that

\begin{equation}\label{bound5-22oct22}
\eqalign{
\#\mathbb{B}^{r_{1}}_{1}\cap\mathbb{C}^{s_{1}}_{1}&\displaystyle\lesssim 2^{2r_{1}+2s_{1}}\sum_{I_{1}\in\mathcal{D}_{[0,1]}^{k_{1}}}\sum_{n_{1},m\in\mathbb{Z}} \left\|\left\langle f^{\xi_{2}}(\cdot)e^{-2\pi im|\cdot|^{2}},\varphi^{n_{1}}_{I_{1}}\right\rangle_{\xi_{1}}\right\|_{L^{2}_{\xi_{2}}}^{2}\cdot \left\|\left\langle g^{\xi_{2}}(\cdot)e^{-2\pi im|\cdot|^{2}},\varphi^{n_{1}}_{I_{2}}\right\rangle_{\xi_{1}}\right\|_{L^{2}_{\xi_{2}}}^{2}\cr
&=2^{2r_{1}+2s_{1}}\displaystyle\sum_{I_{1}\in\mathcal{D}_{[0,1]}^{k_{1}}}\displaystyle\int_{[0,1]^{2}}\left(\sum_{n_{1},m\in\mathbb{Z}}\left|\left\langle f^{\xi_{2}}(\cdot)e^{-2\pi im|\cdot|^{2}},\varphi^{n_{1}}_{I_{1}}\right\rangle_{\xi_{1}}\right|^{2}\cdot\left|\left\langle g^{\widetilde{\xi_{2}}}(\cdot)e^{-2\pi im|\cdot|^{2}},\varphi^{n_{1}}_{I_{2}}\right\rangle_{\xi_{1}}\right|^{2}\right)\mathrm{d}\xi_{2}\mathrm{d}\widetilde{\xi_{2}} \cr
&=2^{2r_{1}+2s_{1}}\displaystyle\sum_{I_{1}\in\mathcal{D}_{[0,1]}^{k_{1}}}\int_{[0,1]^{2}} \frac{1}{2^{-2k_{1}}}\cdot\|f^{\xi_{2}}\|_{L^{2}(I_{1})}^{2}\cdot \|g^{\widetilde{\xi_{2}}}\|_{L^{2}(I_{2})}^{2}\mathrm{d}\xi_{2}\mathrm{d}\widetilde{\xi_{2}} \cr
&\leq\displaystyle 2^{2r_{1}+2s_{1}}\cdot 2^{2k_{1}}\sum_{I_{1}\in\mathcal{D}_{[0,1]}^{k_{1}}}\int_{I_{1}\times [0,1]\times I_{2}\times [0,1]}\mathbbm{1}_{E_{1}}\otimes\mathbbm{1}_{E_{2}} \cr
&\leq \displaystyle 2^{2r_{1}+2s_{1}}\cdot 2^{2k_{1}}\cdot |E_{1}|\cdot |E_{2}|,
}
\end{equation}\label{bound6-22oct22}
by Proposition \ref{bilinear1d-6nov22}. Analogously,
\begin{equation}
\#\mathbb{B}^{r_{2}}_{2}\cap\mathbb{C}^{s_{2}}_{2}\displaystyle\lesssim 2^{2r_{2}+2s_{2}}\cdot 2^{2k_{2}}\cdot |E_{1}|\cdot |E_{2}|.
\end{equation}

Back to the multilinear form, using \eqref{bound1-22oct22}, \eqref{bound2-22oct22}, interpolating between \eqref{bound3-22oct22}, \eqref{bound4-22oct22}, \eqref{ineq1-oct1822} and \eqref{bound1-6nov22},

\begin{equation*}
\eqalign{
|\widetilde{\Lambda}_{k_{1},k_{2}}&(f,g,H)| \cr &\lesssim\displaystyle\sum_{\substack{\overrightarrow{l},\overrightarrow{r} \\ \overrightarrow{s},t\geq 0}}\quad\displaystyle 2^{-\varepsilon l_{1}}\cdot\left(\frac{2^{-\frac{1}{2}r_{1}}\cdot 2^{-\frac{1}{2}r_{2}}}{\|\mathbbm{1}_{\mathbb{X}^{l_{1},r_{2}}}\|^{\frac{1}{4}}_{\ell^{\infty}_{n_{2},m,J_{1}}\ell^{1}_{n_{1},I_{1}}}}\right)^{1-\varepsilon}  \cdot 2^{-\varepsilon l_{2}}\cdot\left(\frac{2^{-\frac{1}{2}s_{1}}\cdot 2^{-\frac{1}{2}s_{2}}}{\|\mathbbm{1}_{\mathbb{X}^{l_{2},s_{1}}}\|^{\frac{1}{4}}_{\ell^{\infty}_{n_{1},m,I_{2}}\ell^{1}_{n_{2},J_{2}}}}\right)^{1-\varepsilon} \cdot 2^{-t} \cr
&\displaystyle\qquad\qquad\cdot \left(\|\mathbbm{1}_{\mathbb{X}^{l_{2},s_{1}}}\|_{\ell^{\infty}_{n_{1},m,I_{2}}\ell^{1}_{n_{2},J_{2}}} \cdot \|\mathbbm{1}_{\mathbb{B}^{r_{1}}_{1}\cap\mathbb{C}^{s_{1}}_{1}}\|_{\ell^{1}_{n_{1},m,I_{1}}}\right)^{\theta_{1}} \cr
&\displaystyle\qquad\qquad\cdot\left(\|\mathbbm{1}_{\mathbb{X}^{l_{1},r_{2}}}\|_{\ell^{\infty}_{n_{2},m,J_{1}}\ell^{1}_{n_{1},I_{1}}}\cdot \|\mathbbm{1}_{\mathbb{B}^{r_{2}}_{2}\cap\mathbb{C}^{s_{2}}_{2} }\|_{\ell^{1}_{n_{2},m,J_{2}}}\right)^{\theta_{2}} \cr
&\displaystyle\qquad\qquad\cdot\left(2^{2t}\cdot 2^{-(k_{1}+k_{2})}\cdot |F|\right)^{\theta_{3}} \cr
&\displaystyle\qquad\qquad\cdot\left(2^{t}\cdot |F|\right)^{\theta_{4}}. \cr
}
\end{equation*}

Choosing $\theta_{4}=3\varepsilon$,  $\theta_{3}=\frac{1}{2}-2\varepsilon$, $\theta_{2}=\frac{1}{4}-\frac{\varepsilon}{2}$, $\theta_{1}=\frac{1}{4}-\frac{\varepsilon}{2}$ and plugging in \eqref{bound5-22oct22} and \eqref{bound6-22oct22},
\begin{equation*}
\eqalign{
|\widetilde{\Lambda}_{k_{1},k_{2}}&(f,g,H)| \cr &\displaystyle\lesssim\displaystyle\sum_{\substack{\overrightarrow{l},\overrightarrow{r} \\ \overrightarrow{s},t\geq 0}}\quad\displaystyle 2^{-\varepsilon l_{1}}\cdot\left(\frac{2^{-\frac{1}{2}r_{1}}\cdot 2^{-\frac{1}{2}r_{2}}}{\|\mathbbm{1}_{\mathbb{X}^{l_{1},r_{2}}}\|^{\frac{1}{4}}_{\ell^{\infty}_{n_{2},m,J_{1}}\ell^{1}_{n_{1},I_{1}}}}\right)^{1-\varepsilon}  \cdot 2^{-\varepsilon l_{2}}\cdot\left(\frac{2^{-\frac{1}{2}s_{1}}\cdot 2^{-\frac{1}{2}s_{2}}}{\|\mathbbm{1}_{\mathbb{X}^{l_{2},s_{1}}}\|^{\frac{1}{4}}_{\ell^{\infty}_{n_{1},m,I_{2}}\ell^{1}_{n_{2},J_{2}}}}\right)^{1-\varepsilon} \cdot 2^{-t} \cr
&\qquad\qquad\qquad\displaystyle\cdot \left(\|\mathbbm{1}_{\mathbb{X}^{l_{2},s_{1}}}\|_{\ell^{\infty}_{n_{1},m,I_{2}}\ell^{1}_{n_{2},J_{2}}} \cdot 2^{2r_{1}+2s_{1}}\cdot 2^{2k_{1}}\cdot |E_{1}|\cdot |E_{2}|\right)^{\frac{1}{4}-\frac{\varepsilon}{2}} \cr
&\qquad\qquad\qquad\displaystyle\cdot\left(\|\mathbbm{1}_{\mathbb{X}^{l_{1},r_{2}}}\|_{\ell^{\infty}_{n_{2},m,J_{1}}\ell^{1}_{n_{1},I_{1}}}\cdot 2^{2r_{2}+2s_{2}}\cdot 2^{2k_{2}}\cdot |E_{1}|\cdot |E_{2}|\right)^{\frac{1}{4}-\frac{\varepsilon}{2}} \cr
&\qquad\qquad\qquad\displaystyle\cdot\left(2^{2t}\cdot 2^{-(k_{1}+k_{2})}|F|\right)^{\frac{1}{2}-2\varepsilon}
\cr
&\qquad\qquad\qquad\displaystyle\cdot\left(2^{t}\cdot |F|\right)^{3\varepsilon} \cr
&\lesssim\displaystyle\sum_{\substack{\overrightarrow{l},\overrightarrow{r} \\ \overrightarrow{s},t\geq 0}} 2^{-\varepsilon l_{1}}\cdot 2^{-\varepsilon l_{2}}\cdot 2^{-\frac{\varepsilon}{2} r_{1}}\cdot 2^{-\frac{\varepsilon}{2} r_{2}}\cdot 2^{-\frac{\varepsilon}{2} s_{1}}\cdot 2^{-\frac{\varepsilon}{2} s_{2}}\cdot 2^{-\varepsilon t}\cdot 2^{\varepsilon\left(k_{1}+k_{2}\right)}\cr
&\qquad\qquad\qquad\displaystyle\cdot \left(\|\mathbbm{1}_{\mathbb{X}^{l_{1},r_{2}}}\|^{\frac{1}{4}}_{\ell^{\infty}_{n_{2},m,J_{1}}\ell^{1}_{n_{1},I_{1}}}\cdot \|\mathbbm{1}_{\mathbb{X}^{l_{2},s_{1}}}\|^{\frac{1}{4}}_{\ell^{\infty}_{n_{1},m,I_{2}}\ell^{1}_{n_{2},J_{2}}}\right)^{-\frac{\varepsilon}{4}} \cr
&\qquad\qquad\qquad\displaystyle\cdot |E_{1}|^{\frac{1}{2}-\varepsilon}\cdot |E_{2}|^{\frac{1}{2}-\varepsilon}\cdot |F|^{\frac{1}{2}+\varepsilon}. \cr
}
\end{equation*}

Recall that the sums run over indices $l_{1},l_{2},r_{1},r_{2},s_{1},s_{2}$ such that

$$2^{-l_{1}}\lesssim 2^{-\frac{(k_{1}+k_{2})}{2}},$$
$$2^{-l_{2}} \lesssim 2^{-\frac{(k_{1}+k_{2})}{2}}.$$
by the triangle inequality and
$$2^{-r_{1}}\lesssim 2^{-\frac{k_{1}}{2}},\quad 2^{-r_{2}}\lesssim 2^{-\frac{k_{2}}{2}},\quad 2^{-s_{1}}\lesssim 2^{-\frac{k_{1}}{2}},\quad 2^{-s_{2}}\lesssim 2^{-\frac{k_{2}}{2}}. $$
since
\begin{equation*}
\eqalign{
\displaystyle 2^{-2r_{1}}\lesssim \int_{0}^{1}\left|\int_{0}^{1}\frac{1}{|I_{1}|^{\frac{1}{2}}}\cdot |f(\xi_{1},\xi_{2})|\cdot\varphi_{I_{1}}(\xi_{1})\mathrm{d}\xi_{1}\right|^{2}\mathrm{d}\xi_{2}&\displaystyle\lesssim \int_{0}^{1}\int_{\textnormal{supp}(\varphi_{I_{1}})} |\mathbbm{1}_{E_{1}}(\xi_{1},\xi_{2})|\mathrm{d}\xi_{1}\mathrm{d}\xi_{2} \cr
&\displaystyle\lesssim |E_{1}\cap (I_{1}\times [0,1])| \cr
&\displaystyle\leq 2^{-k_{1}}, \cr
}
\end{equation*}
and analogously for $r_{2},s_{1},s_{2}$. Therefore the sum above is bounded by

\begin{equation}\label{almostfinal2-6nov22}
    |\widetilde{\Lambda}_{k_{1},k_{2}}(f,g,H)| \lesssim_{\varepsilon} 2^{-\frac{\varepsilon}{2}\left(k_{1}+k_{2}\right)}\cdot |E_{1}|^{\frac{1}{2}-\frac{\varepsilon}{2}}\cdot |E_{2}|^{\frac{1}{2}-\frac{\varepsilon}{2}}\cdot |F|^{\frac{1}{2}+\varepsilon}.
\end{equation}

We also have the trivial estimate

\begin{equation}\label{almostfinal3-18feb23}
    |\widetilde{\Lambda}_{k_{1},k_{2}}(f,g,H)| \lesssim |E_{1}|\cdot |E_{2}|\cdot |F|.
\end{equation}

Interpolating between \eqref{almostfinal2-6nov22} and \eqref{almostfinal3-18feb23} with weights $\gamma_{1}=\frac{1}{1+2\varepsilon}$ and $\gamma_{2}=\frac{2\varepsilon}{1+2\varepsilon}$, respectively, gives

\begin{equation}\label{almostfinal4-18feb23}
\eqalign{
    |\widetilde{\Lambda}_{k_{1},k_{2}}(f,g,H)| &\displaystyle\lesssim_{\varepsilon} 2^{-\frac{\varepsilon}{2(1+2\varepsilon)}\left(k_{1}+k_{2}\right)}\cdot |E_{1}|^{\frac{1}{2}}\cdot |E_{2}|^{\frac{1}{2}}\cdot |F|^{\frac{1}{2}+\frac{2\varepsilon}{1+2\varepsilon}} \cr
    &\displaystyle \lesssim_{\varepsilon} 2^{-\frac{\varepsilon}{3}\left(k_{1}+k_{2}\right)}\cdot |E_{1}|^{\frac{1}{2}}\cdot |E_{2}|^{\frac{1}{2}}\cdot |F|^{\frac{1}{2}+\frac{2\varepsilon}{1+2\varepsilon}}. \cr
    }
\end{equation}

Restricted weak-type interpolation shows that $\widetilde{T}_{k_{1},k_{2}}$ maps $L^{2}\times L^{2}$ to $L^{2+\frac{8\varepsilon}{1-2\varepsilon}}$ with operatorial bound $O(2^{-\frac{\varepsilon}{3}(k_{1}+k_{2})})$, as desired.
\end{proof}


\begin{thebibliography}{99}

\bibitem{BBCH} J. Bennett, N. Bez, A. Carbery, D. Hundertmark, \emph{Heat-flow monotonicity of Strichartz norms},  Analysis \& PDE, Anal. PDE 2(2), 147-158, (2009)

\bibitem{BG} J. Bourgain, L. Guth, \emph{Bounds on oscillatory integral operators based on multilinear estimates}, Geom. Funct. Anal. 21 (2011), no. 6, 1239-1295.

\bibitem{Fef1} C. Fefferman, \emph{Inequalities for strongly singular convolution operators}. Acta Math., 124, 1970.

\bibitem{F} D. Foschi, \emph{Maximizers for the Strichartz inequality},  J. Eur. Math. Soc. (JEMS) 9:4 (2007), 739-774. MR 2008k:35389 Zbl 05255314

\bibitem{G} F. Gon\c calves, \emph{Orthogonal polynomials and sharp estimates for the Schr\"odinger equation},  International Mathematics Research Notices (2019), doi 10.1093/imrn/rnx200

\bibitem{HZ} D. Hundertmark, V. Zharnitsky, \emph{On sharp Strichartz inequalities in low dimensions},  Int. Math. Res. Not. (2006), id 34080. MR 2007b:35277 Zbl 1131.35308

\bibitem{LP} F. Linares, G. Ponce, \emph{Introduction to nonlinear dispersive equations}. Universitext. Springer, New York, 2009.

\bibitem{Mat1} P. Mattila, \emph{Fourier Analysis and Hausdorff Dimension}. Cambridge University Press, 2015.

\bibitem{MO} C. Muscalu, I. Oliveira, \emph{A new approach to the Fourier extension problem for the paraboloid}, preprint, 	arXiv:2110.12482.

\bibitem{MS} C. Muscalu, W. Schlag, \emph{Classical and Multilinear Harmonic Analysis I}. Cambridge University Press, New York NY 2013.

\bibitem{OT} T. Ozawa, Y. Tsutsumi, \emph{Space-time estimates for null gauge forms and nonlinear Schr\"odinger equations}, Differential Integral Equations 11 (1998), 201-222.

\bibitem{PV} F. Planchon, L. Vega, \emph{Bilinear virial identities and applications}, Ann. Sci. \'E c. Norm. Sup\'er. (4) 42 (2009), 261-290.

\bibitem{Str1} R. Strichartz, \emph{Restriction of the Fourier transform to quadratic surfaces and decay of solutions of wave equations}. Duke Math. J., 70, 1977.

\bibitem{Tao-notes} T. Tao, \emph{Recent progress on the Restriction conjecture}. Park City proceedings, 2003.

\bibitem{TVV1} T. Tao, A. Vargas, L. Vega, \emph{A bilinear approach to the Restriction and Kakeya conjectures}. Journal of the American Mathematical Society, 11(4), 1998.

\bibitem{Thiele1} C. Thiele, \emph{Wave packet analysis}. CBMS Regional Conference Series in Mathematics, vol. 105, Published for the Conference Board of the Mathematical Sciences, Washington, DC; by the
American Mathematical Society, Providence, RI, 2006.

\bibitem{Tomas} P. Tomas, \emph{A Restriction theorem for the Fourier transform}. Bull. Amer. Math. Soc., (81), 1975.

\bibitem{Zyg} A. Zygmund, \emph{On Fourier coefficients and transforms of functions of two variables.}. Studia Math., 50, 1974.


\end{thebibliography}

\Addresses

\end{document}